\theoremstyle{plain}
\newtheorem{corollary}{Corollary}
\newtheorem{proposition}{Proposition}
\theoremstyle{definition}
\newtheorem{definition}{Definition}
\theoremstyle{remark}
\newtheorem*{notation}{Notation}
\begin{document}
\articletype{}
\title{Condition-based Maintenance for Multi-component Systems: Modeling, Structural Properties, and Algorithms}
\author{
	\name{Zhicheng Zhu and Yisha Xiang}
	\affil{Texas Tech University, Lubbock, TX, 79409, USA}
}
\maketitle

\begin{abstract}
 Condition-based maintenance (CBM) is an  effective maintenance strategy to improve system performance while lowering  operating and maintenance costs. Real-world systems typically consist of a large number of components with various interactions between components. However, existing studies on CBM focus on single-component systems.  Multi-component condition-based maintenance, which joins the components' stochastic degradation processes and the combinatorial maintenance grouping problem, remains an open issue in the literature. In this paper, we study the CBM optimization problem for multi-component systems. We first develop a multi-stage stochastic integer model with the objective of minimizing the total maintenance cost over a finite planning horizon. We then investigate the structural properties of a two-stage model. Based on the structural properties, two efficient algorithms are designed to solve the two-stage model. Algorithm 1 solves the problem to its optimality and Algorithm 2 heuristically searches for high-quality solutions based on Algorithm 1. Our computational studies show that Algorithm 1 obtains optimal solutions in a reasonable amount of time and Algorithm 2 can find high-quality solutions quickly. The multi-stage problem is solved using a rolling horizon approach based on the algorithms for the two-stage problem.
\end{abstract}

\begin{keywords}
Condition-based maintenance, multi-component systems, multi-stage stochastic integer programming, endogenous uncertainty
\end{keywords}

\section{Introduction} 
Reliability is the central concern of many mission-critical systems, such as aerospace systems, electric power systems, and nuclear systems. Investigations show that many accidents are caused by equipment failures, which were attributed to the lack of effective maintenance methods. For example, the space shuttle Challenger accident \cite{feynman1986report} and the Deepwater Horizon drilling rig explosion \cite{pate1993learning} occur in part because of inadequate maintenance. As the complexity of modern engineering systems increases, it is imperative to develop cost-effective maintenance plans for complex systems.

Maintenance strategies can be generally classified into two categories: time-based maintenance (TBM) and condition-based maintenance (CBM). The literature on TBM and CBM for single-component systems is abundant \cite{ding2015maintenance,alaswad2017review}. However, much less attention has been paid on multi-component systems. Existing studies on multi-component systems are mainly time-based \cite{dekker1997review, nicolai2008optimal, thomas1986survey}. Despite the fact that CBM can be more cost-effective compared to TBM \cite{ahmad2012overview,tian2011condition,alaswad2017review}, CBM for multi-component systems is underexplored.

A multi-component system is usually subject to various interactions among components, such as stochastic dependence, structural dependence, and economic dependence \cite{thomas1986survey,dekker1997review}. Stochastic dependence means the state of one component influences the lifetime distributions of other components. Structural dependence applies if components structurally form a part, so that maintenance of a failed component implies maintenance of other components as well. Economic dependence occurs if any maintenance action incurs a fixed system-dependent cost, often referred to as setup cost, due to mobilizing repair crew,  disassembling machines, and downtime loss \cite{laggoune2010impact}. This setup cost can be significant in many capital-intensive industries. For example,  the production losses during the shutdown ranges from \$500 to \$100,000 per hour in a chemical plant and millions of dollars per day in offshore drilling refineries \cite{amaran2016medium}. Therefore, significant cost savings can be achieved by  maintaining multiple components jointly instead of separately.

In this paper, we study the CBM optimization problem for multi-component systems with economic dependence over a finite planning horizon using a stochastic programming approach. The objective is to minimize total maintenance cost by selecting components for maintenance at each decision period. This problem is challenging because it joins the components' stochastic degradation processes and the combinatorial maintenance grouping problem \cite{dekker1997review,scarf1997application,dekker1998impact}. In addition, the component state transition probability depends on the maintenance decision, making the problem decision-dependent, which is different from the standard approach to formulating stochastic programs based on the assumption that the stochastic process is independent of the optimization decisions. This endogenous uncertainty can make the stochastic programs more computationally challenging. There is a lack of general methods to efficiently solve this type of problem. Existing studies on multi-component maintenance planning often use simplified assumptions \cite{tian2011condition,grall2002condition,castanier2005condition} or resort to simulation methods \cite{barata2002simulation,laggoune2009opportunistic} to reduce mathematical difficulties in modeling and solving this problem. 

We develop a general multi-stage stochastic maintenance model and do not restrict any grouping opportunities. Due to the complexity of the multi-stage stochastic maintenance model with integer decision variables, we first consider a two-stage model and investigate its structural properties. Based on the structural properties, we design two efficient algorithms to solve the two-stage model. The multi-stage model is solved using a rolling horizon approach based on the algorithms for the two-stage model. The main contribution of this paper is threefold.

(1)Develop an analytical CBM model for multi-component systems using a stochastic programming approach. This model is among the very first efforts that provide analytical expressions for the cost function and maintenance decisions of multi-component CBM. The proposed model is general with no restrictions for grouping as opposed to exiting one that only allow grouping at PM or CM.

(2)Establish structural properties for the two-stage model. These theoretical properties provide the conditions and search directions of improving any feasible solution, and lead to significant reduction of the search space of the problem.

(3) Design efficient algorithms to find high-quality solutions. We develop algorithms for the two-stage problem based on its structural properties, which are then implemented on a rolling-horizon to solve the multi-stage problem. Computational studies show that our algorithms can provide satisfactory solutions within a reasonable amount of time, particularly for large-scale problems.

The remainder of this paper is organized as follows. Section \ref{secLit} reviews the related studies on multi-component maintenance and stochastic programming methods. In Section \ref{secModel}, we develop a CBM model for multi-component systems over multiple decision periods. Section \ref{secProp} investigates structural properties for the two-stage model. In Section \ref{secAlg}, we design two algorithms for the two-stage problem and use rolling horizon technique  to approximate the multi-stage model.  Computational studies are presented in Section \ref{sectionCom}. Section \ref{sectionConclusion} concludes this research and discusses the future research directions.

\section{Literature review}
\label{secLit}
We model the CBM optimization problem for multi-component systems  using stochastic programming. We first examine the existing literature on multi-component maintenance, and then review the solution techniques for stochastic programming.

\subsection{Multi-component maintenance} 

Most studies on multi-component maintenance are focused on TBM, which can be further divided into direct-grouping \cite{dekker1996joint,wildeman1997dynamic}  and indirect-grouping approaches \cite{goyal1985determining,goyal1992determining,epstein1985opportunistic,hariga1994deterministic} . Direct-grouping approach partitions the components into several fixed groups and  always maintains the components in a group jointly. By using this approach, the problem becomes a set-partitioning problem, which is NP-complete. Indirect-grouping groups preventive maintenance (PM) activities by making the PM interval a multiple of a basis interval, so that the maintenance of different components can coincide \cite{goyal1985determining,goyal1992determining}, or performs major PM on all component jointly at the end of a common interval and allows minor or major PM within this interval \cite{epstein1985opportunistic,hariga1994deterministic}. Unlike the fixed structure under direct-grouping, there is no fixed group structure under indirect-grouping. Some researchers formulate an indirect-grouping model  as a mixed
integer programming (MIP) problem \cite{sule1979determination,epstein1985opportunistic,hariga1994deterministic}. Because of the simplified policy structure, the MIP model can be separated by
components, which greatly reduces the computational complexity.
However, both direct- and indirect-grouping approaches only group PM activities and ignore the grouping opportunities provided by CM. Patriksson et al. \cite{patriksson2015stochastic} use stochastic programming to model a time-based multi-component replacement problem. However, CM and PM have the same cost and are not distinguished in their paper. 

Much less attention has been paid to CBM for multi-component systems \cite{alaswad2017review}. Opportunistic maintenance (OM) has been considered for multi-component CBM \cite{grall2002condition,shafiee2015opportunistic,castanier2005condition}. OM takes advantage of CM by performing PM on functioning components when any failure happens. Castanier et al. \cite{castanier2005condition} consider both PM and CM as opportunities for maintaining other functioning components and formulate the problem as a semi-regenerative process. However, they only consider a two-component system because of the exponential growth of problem size. Some studies have used Markov decision process (MDP) to solve the multi-component maintenance optimization problem. However, due to the state space grows exponentially as the number of components and/or the number of states increase, this method is limited to small scale problems. For example, Jia \cite{jia2010structural}  models the OM problem  as an MDP and investigate the structural property of the optimal policy. A two-component system is studied in \cite{jia2010structural}. Several studies use simulation methods to find optimal opportunistic CBM policies \cite{barata2002simulation,laggoune2009opportunistic}, which also suffers from curse of dimensionality.

Proportional hazard model (PHM) incorporates both event data and CM data by modeling the lifetime of a component as a hazard rate process \cite{alaswad2017review,tsui2015prognostics}. Tian et al. extends the PHM from single-component CBM to multi-component CBM \cite{tian2011condition}. They study two practical cases with systems of two components and three components.

\subsection{Stochastic programming} 

Various methods and techniques have been developed to solve a stochastic programming problem. For a two-stage stochastic linear program, Benders decomposition \cite{birge2011introduction,bodur2016strengthened} and progressive hedging algorithm (PHA) \cite{rockafellar1991scenarios,watson2011progressive} are two major decomposition methods. Benders decomposition is a vertical decomposition approach that decomposes the problem into a master problem that consists of the first-stage decisions and the subproblems that consist of second-stage decisions of all scenarios. PHA is a horizontal decomposition approach that decomposes the problem by scenarios. It first independently solves all subproblems at each iteration and then forces the non-anticipatively constraints converge.

Multi-stage stochastic programming extends two-stage stochastic programming by allowing revised decisions at each stage based on uncertainty realizations observed so far \cite{ahmed2003multi}. For a multi-stage stochastic linear program, nested Benders decomposition \cite{naoum2010nested,parpas2007computational} that extended from Benders decomposition and PHA are also two common solution approaches. However, because the size of the scenario tree grows exponentially as the number of stages increases, both approaches are computationally intractable. Stochastic dual dynamic programming (SDDP) \cite{shapiro2011analysis,pereira1991multi} overcomes the exploding scenario tree size problem in nested Benders decomposition by combining scenario tree nodes. The drawback of the SDDP approach is that it relies on special problem structure such as stage-wise independence \cite{bodur2017two}. Rolling horizon provides a heuristic approach to approximating a  multi-stage stochastic program by solving the  two-stage problem on a rolling basis and utilizing the first-stage solution \cite{kouwenberg2001scenario,beraldi2011short,tolio2007rolling}. This approximation approach requires the two-stage problem to be computationally tractable. Recently, rule-based method has attracted some interests in addressing the intractability issue in multi-stage stochastic programming \cite{shapiro2005complexity,bampou2011scenario,chen2008linear,bodur2017two}. This method restricts the solution to have some specific function forms, such as linear \cite{bodur2017two}, piece-wise linear \cite{chen2008linear}, and polynomial \cite{bampou2011scenario}. Because the optimal decision rules of arbitrary multi-stage stochastic programs do not have general forms, rule-based methods cannot guarantee the solution quality in general \cite{shapiro2005complexity}.

A stochastic integer program further combines the difficulty of stochastic programming and integer programming and is challenging to solve. Nested Benders decomposition and SDDP that utilize Benders cuts become prohibited to this problem because strong duality does not hold due to integrality constraints. Moreover, PHA does not perform well for this problem in general because the non-anciticipativity constraints may converge slowly due to the integrality constraints and the intractability of solving each integer subproblem. Integer L-shaped method is another approach in solving stochastic integer program by using integer L-shaped cuts within the Benders decomposition framework \cite{birge2011introduction,laporte1993integer}. However, this method is typically inefficient because it needs to generate an integer L-shaped cut for every feasible solution in the worst case scenario.

Stochastic programming with endogenous uncertainty draws some attentions recently because this type of uncertainty presents in a large number of applications \cite{goel2006class,zhan2017generation,peeta2010pre}. Endogenous uncertainty implies that the underlying stochastic process is influenced by the decisions. Therefore, the probabilities of scenarios are decision-dependent and usually nonlinear \cite{zhan2017generation,peeta2010pre}. There is a lack of efficient method to solve this type of problem. 

Our review shows that there is no general method to solve the proposed multi-stage stochastic maintenance model with integer decision variables and endogenous uncertainty. Efficient algorithms are needed to find high-quality solutions.

\section{Model development}
\label{secModel}
\begin{notation} \quad
\begin{description}
	\setlength{\parskip}{0pt}
	\item [$n:$] number of components
	\item[$\mathcal{N}:$] component set, $\mathcal{N}=\{1, 2, ..., n\}$
	\item [$T:$] number of decision stages
	\item[$\mathcal{T}:$] decision-stage set, $\mathcal{T}=\{1, 2, ..., T\}$
	\item [$\Omega_t:$] node set at stage $t \in \mathcal{T}$
	\item [$\omega_t:$] index of node at stage $t \in \mathcal{T}$, i.e., $\omega_t \in \Omega_t$
	\item [$a(\omega_t):$] ancestor node of $\omega_t \in \Omega_t$, $t \in \mathcal{T}\backslash\{1\}$
	\item [$\Omega(\omega_t):$] child nodes of $\omega_t \in \Omega_t$, $t \in \mathcal{T}\backslash\{T\}$				
	\item[$g_{it}:$] state of component \textit{i} at stage \textit{t}
	\item[$g_{it}^{\omega_t}:$] state of component \textit{i} in scenario $\omega_t \in \Omega_t$  at stage $t \in \mathcal{T}$
	\item[$Q_i(g,g^\prime):$] state transition probability from state $g$ to $g^\prime$ for component $i \in \mathcal{N}$
	\item[$c_{i, {\rm pm}}:$] PM cost of component \textit{i}
	\item[$c_{i,{\rm cm}}:$] CM cost of component \textit{i}
	\item[$c_{\rm s}:$] setup cost
	\item[$\tilde{x}_{it}:$] maintenance decision of component \textit{i} at stage \textit{t} without considering economic dependence
	\item[$\tilde{x}_{it}^{\ast}:$] optimal maintenance decision of component $i \in \mathcal{N}$ at stage $t \in \mathcal{T}$ without considering economic dependence
	\item[$x_{it}:$] equals to 1 if any maintenance is performed on component  $i \in \mathcal{N}$ at stage $t \in \mathcal{T}$ and 0 otherwise
	\item[$x_{it}^{\omega_t}:$] $x_{it}$ in scenario $\omega_t \in \Omega_t$
	\item[$x_{t}:$] vector of $x_{it}$ for all $i \in \mathcal{N}$ at stage $t \in \mathcal{T}$: $(x_{1,t}, x_{2,t}, ..., x_{n,t})$
	\item[$x_{t}^{\omega_t}:$] vector of $x_{it}^{\omega_t}$ for all $i \in \mathcal{N}$ at stage $t \in \mathcal{T}$ in scenario $\omega_t \in \Omega_t$: $(x_{1,t}^{\omega_t}, x_{2,t}^{\omega_t}, ..., x_{n,t}^{\omega_t})$
	\item[$y_{it}:$] equals to 1 if CM is performed on component $i \in \mathcal{N}$ at stage $t \in \mathcal{T}$ and 0 otherwise
	\item[$y_{it}^{\omega_t}:$] $y_{it}$ in scenario $\omega_t \in \Omega_t$
	\item[$z_{t}:$] equals to 1 when any maintenance is performed at stage  $t \in \mathcal{T}$  and 0 otherwise
	\item[$z_{t}^{\omega_t}:$] $z_{t}$ in scenario $\omega_t \in \Omega_t$
	\item[$N_{0}:$] do-nothing set at the first stage, $N_{0}=\{i|x_{i,1}=0, i \in \mathcal{N}\}$
	\item[$N_{1}:$] maintenancce set at the first stage, $N_{1}=\{i|x_{i,1}=1, i \in \mathcal{N}\}$
\end{description}
\end{notation}

We consider condition-based maintenance optimization for multi-component systems. The system consists of multiple components with economic dependence. Significant cost savings can be achieved by maintaining multiple components jointly rather than separately.  We focus on systems with hidden failure which can only be revealed through inspection. For example, a production system may have failed but still operates, producing non-conforming products, and the failure can only be detected by inspection \cite{cassady2000combining}. We assume components deteriorate independently. Such an assumption is common for systems where components are not subject to common cause failures or the deterioration dependence among components is weak \cite{tian2011condition,dekker1996joint,wildeman1997dynamic,laggoune2009opportunistic}. Each component has $\{1,2,...,m-1,m\}$ condition states, where a larger state represents a worse yet functioning condition and state $m$ is the failure state.  All components are subject to stochastic degradation. Without maintenance intervention, the condition of a component cannot return to a better state. Inspection is performed periodically on the system to reveal the states of all components and each inspection is a decision stage. In some real-world problems, an inspection schedule is already in place based on experiences or required by regulations. For example, many refinery and chemical plants conduct annual or biannual turnarounds during which they inspect their equipment. In cases where the interval length needs to be determined, an optimal interval length can be determined using decomposition methods \cite{nicolai2008optimal,wildeman1997dynamic} or numerical search methods. At each decision stage, all failed components need to be correctively maintained and all functioning ones can be preventively maintained if desired. Both CM and PM restore a component to an as-good-as-new state, i.e., state 1.

This maintenance optimization problem is naturally a multi-stage stochastic integer program. At each stage $t \in \mathcal{T}$, we first observe all components' states $g_{it}$, $\forall i \in \mathcal{N}$. We then decide whether a component needs to be maintained ($x_{it},i \in \mathcal{N}, t \in \mathcal{T}$). All failed components $i$ are correctively maintained ($y_{it}=1,i \in \mathcal{N},t \in \mathcal{T}$). If there is any maintenance performed at stage $t \in \mathcal{T}$, the setup cost  is incurred ($z_t=1$).

We illustrate the decision process using a scenario tree in Figure \ref{Fig.scenTree}. In the scenario tree, we need to make maintenance decisions at each node $\omega_t \in \Omega_t,t \in \mathcal{T}$. Each node $\omega_t$ is characterized by a combination of all components' states, i.e., $(g_{1,t}^{\omega_t},g_{2,t}^{\omega_t},...,g_{n,t}^{\omega_t})$, and $\Omega_t$ is the set of all nodes at stage $t \in \mathcal{T}$.  For each node $\omega_t$, $t \in \mathcal{T}\backslash\{T\}$, it has a set of child nodes $\Omega(\omega_t)$ at stage $t+1$, where $\Omega(\omega_t)$ collects all possible combinations of all components' states. For each node $\omega_t$, $t \in \mathcal{T}\backslash\{1\}$, it has  a unique ancestor node $a(\omega_t)$ at stage $t-1$. A node path from the root node $(\omega_1)$ to a last stage node $(\omega_T \in \Omega_T)$ is referred to as a scenario. The total number of scenarios is $|\Omega_T|=m^{n(T-1)}$, which grows exponentially as the number of components and/or stages increase.

\begin{figure}[H] 
	\centering 
	\includegraphics[width=0.7\textwidth]{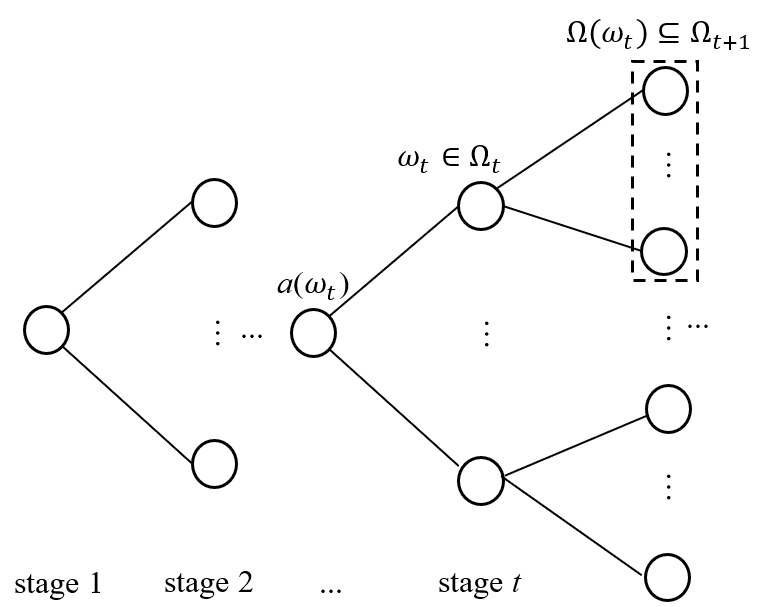} 
	\caption{Scenario tree} 
	\label{Fig.scenTree} 
\end{figure}

Our objective is to minimize the total cost over the planning horizon $\mathcal{T}$, where the total cost includes the first-stage cost and the expected second-stage cost of all nodes $\omega_2 \in \Omega_2$. For the cost at each node $\omega_t \in \Omega_t$ at stage $t \in \mathcal{T}\backslash\{T\}$, it consists of current-node cost and the expected cost of all child nodes $\Omega(\omega_t)$. For last stage nodes, i.e., $t=T$, the cost concerns the current-node cost only.

Given the component states $g_{it}^{\omega_t}$ for all components $i \in \mathcal{N}$ at node $\omega_t \in \Omega_t$ in stage $t \in \mathcal{T}\backslash\{T\}$, the probability from node $\omega_t$ to its child node $\omega_{t+1}$ depends on the maintenance decision $x_t^{\omega_t}$. For example, if the prior-maintenance state of a component in node $\omega_t$ is $g$, the post-maintenance transition probability is $Q(1,g^\prime)$ ($g^\prime \in G$) and $Q(g,g^\prime)$ ($g^\prime \geq g$) otherwise, and this leads to different node transition probabilities.

Denote  $p(\omega_{t+1}|x_t^{\omega_t})$ as the probability from node $\omega_t,t\in \mathcal{T}\backslash\{T\}$ to its child node $\omega_{t+1}$ given decision $x_t^{\omega_t}$, and  $Q_i(g,g^\prime)$ as the state transition probability from state $g$ to $g^\prime$ for component $i \in \mathcal{N}$. Since components deteriorate independently,  we have
$$
p(\omega_{t+1}|x_t^{\omega_t}) = \prod_{i \in \mathcal{N}}[Q_i(g_{it}^{\omega_t}, g_{i,t+1}^{\omega_{t+1}})(1-x_{it}^{\omega_t})+Q_i(1, g_{i,t+1}^{\omega_{t+1}})x_{it}^{\omega_t}].
$$
Next, we develop the multi-stage stochastic model:\\
Decision variables $(i \in \mathcal{N},\omega_t \in \Omega_t, t \in \mathcal{T})$:\\
$x_{it}^{\omega_t}$: 1 if component $i$ is maintained at node $\omega_t$ in  stage $t$, and 0 otherwise.\\
$y_{it}^{\omega_t}$: 1 if component $i$ is correctively maintained at node $\omega_t$ in  stage $t$, and 0 otherwise.\\
$z_{t}^{\omega_t}$: 1 if there is any maintenance occurs at node $\omega_t$ in  stage $t$, and 0 otherwise.\\
\noindent
\textbf{Multi-stage stochastic model (P1):}
\begin{spacing}{0.5}
\begin{equation}	
\label{obj1}
 V_1 = \min_{x,y,z} \sum_{i \in \mathcal{N}}c_{i,\text{pm}}x_{i,1}+\sum_{i \in \mathcal{N}}(c_{i,\text{cm}}-c_{i,\text{pm}})y_{i,1}+c_{\text{s}}z_{1}+\sum_{\omega_2 \in \Omega_2}p(\omega_2|x_{1})V_2(\omega_2)
\end{equation}
s.t.
\begin{equation}
\label{const1}
V_t(\omega_t)=
\left\{
\begin{array}{ll}
\begin{split}
\min_{x,y,z} &\sum_{i \in \mathcal{N}}c_{i,\text{pm}}x_{it}^{\omega_t}+\sum_{i \in \mathcal{N}}(c_{i,\text{cm}}-c_{i,\text{pm}})y_{it}^{\omega_t}\\
+&c_{\text{s}}z_{t}^{\omega_t}+\sum_{\omega_{t+1} \in \Omega(\omega_{t})}p(\omega_{t+1}|x_t^{\omega_t})V_{t+1}(\omega_{t+1})
\end{split}
& , i \in \mathcal{N},\omega_t \in \Omega_t, t \in \mathcal{T}\backslash\{T\},\\
\specialrule{0em}{1ex}{1ex}
\begin{split}
\min_{x,y,z} & \sum_{i \in \mathcal{N}}c_{i,\text{pm}}x_{it}^{\omega_t}+\sum_{i \in \mathcal{N}}(c_{i,\text{cm}}-c_{i,\text{pm}})y_{it}^{\omega_t}\\
+&c_{\text{s}}z_{t}^{\omega_t}
\end{split}
& , i \in \mathcal{N},\omega_t \in \Omega_t, t = T
\end{array}
\right.
\end{equation}
\begin{equation}	
\label{const2}
x_{it}^{\omega_t} \leq z_{t}^{\omega_t}, i \in \mathcal{N},\omega_t \in \Omega_t, t \in \mathcal{T}
\end{equation}
\begin{equation}	
\label{const3}
g_{it}^{\omega_t}(1-y_{it}^{\omega_t}) \leq m-1, i \in \mathcal{N},\omega_t \in \Omega_t, t \in \mathcal{T}
\end{equation}
\begin{equation}	
\label{const4}
y_{it}^{\omega_t} \leq x_{it}^{\omega_t}, i \in \mathcal{N},\omega_t \in \Omega_t, t \in \mathcal{T}
\end{equation}
\begin{equation}	
\label{const5}
x_{it}^{\omega_t},y_{it}^{\omega_t} \in \{0,1\}, i \in \mathcal{N},\omega_t \in \Omega_t, t \in \mathcal{T}
\end{equation}
\begin{equation}	
\label{const6}
z_{t}^{\omega_t} \in \{0,1\} ,\omega_t \in \Omega_t, t \in \mathcal{T}
\end{equation}
\end{spacing}

\vskip 0.5 cm

Objective function (\ref{obj1}) consists of the total cost at the first stage and the expected total cost at the second stage. The objective function $V_t(\omega_t)$ for node $\omega_t \in \Omega_t$ at stage $t \in \mathcal{T}$ is given by constraint (\ref{const1}). Constraints (\ref{const2}) ensure setup cost is incurred whenever a maintenance action is performed. Constraints (\ref{const3}) force CM actions on all failed components. Constraints (\ref{const4}) guarantee that the indicator of maintenance action $(x_{it}^{\omega_t})$ is set to 1 when CM is performed. Constraints (\ref{const5}) and (\ref{const6}) are integrality constraints for all decision variables.

As illustrated in Figure \ref{Fig.scenTree}, the problem size of P1  grows exponentially as the number of components increases. As discussed in the literature review, there is no general method to solve this problem due to the lack of structural properties in multi-stage stochastic integer programs with endogenous uncertainty. 
Therefore, we first consider a  two-stage problem.

The two-stage problem can be simplified by eliminating the second stage because the closed-form solutions for all second-stage subproblems can be obtained. Note that for the ease of notation, we drop the subscripts of $\omega_2$, $\Omega_2$ and $V_2$ in the two-stage model. First, for any subproblem $\omega \in \Omega$, the objective function $V(\omega)$ and all constraints are independent of the first-stage decisions and only depends on the components' states in scenario $\omega$. Because the second stage is the last stage of the two-stage problem, to minimize any subproblem, it is obvious that we only need to correctively maintain all failed components to satisfy constraints (\ref{const3}) and do nothing on functioning components.  Therefore, the optimal solutions in the second-stage subproblems are

\begin{spacing}{0.5}
	\begin{equation}
	\label{P2_opt1}
	(y_{i,2}^{\omega})^{\ast}=\lfloor{\frac{g_{i,2}^{\omega}}{m}\rfloor}, i \in \mathcal{N},\omega\in\Omega,
	\end{equation}
	\begin{equation}
	\label{P2_opt2}
	(x_{i,2}^{\omega})^{\ast}=(y_{i,2}^{\omega})^{\ast}, i \in \mathcal{N},\omega\in\Omega
	\end{equation}
	and 
	\begin{equation}
	\label{P2_opt3}
	(z_{2}^{\omega})^{\ast}=\min(1,\sum_{i \in \mathcal{N}}(x_{i,2}^{\omega})^{\ast}),\omega\in\Omega.
	\end{equation}
\end{spacing}
\vskip 0.5 cm

Based on Equations (\ref{P2_opt1}) to (\ref{P2_opt3}), the two-stage model (P2) is described as follows:

\noindent
\textbf{Two-stage stochastic model (P2):}
\begin{spacing}{0.5}
\begin{equation}	
\label{P2O1}
\text{min} \sum_{i \in \mathcal{N}}c_{i,\text{pm}}x_{i,1}+\sum_{i \in \mathcal{N}}(c_{i,\text{cm}}-c_{i,\text{pm}})y_{i,1}+c_{\text{s}}z_{1}+V
\end{equation}

s.t.
\begin{equation}	
\label{P2C1_1}
x_{i,1} \leq z_{1}, i \in \mathcal{N}
\end{equation}
\begin{equation}	
\label{P2C1_2}
g_{i,1}(1-y_{i,1}) \leq m-1, i \in \mathcal{N}
\end{equation}
\begin{equation}	
\label{P2C1_3}
y_{i,1} \leq x_{i,1}, i \in \mathcal{N}
\end{equation}
\begin{equation}	
\label{P2C1_4}
x_{i,1},y_{i,1} \in \{0,1\}, i \in \mathcal{N}
\end{equation}
\begin{equation}	
\label{P2C1_5}
z_{1} \in \{0,1\}
\end{equation}
\end{spacing}
\begin{spacing}{1}
where 
\begin{equation}
\label{P2V}
\begin{split}
V=\sum_{\omega \in \Omega}p(\omega|x_{1})V(\omega)=&\sum_{i \in \mathcal{N}}(Q_{i}(g_{i,1},m)(1-x_{i,1})+
Q_{i}(1,m)x_{i,1}))c_{i,\text{cm}} \\
&+(1-\prod_{i \in \mathcal{N}}(1-Q_{i}(g_{i,1},m)(1-x_{i,1})-
Q_{i}(1,m)x_{i,1}))c_{\text{s}}.
\end{split}
\end{equation}
\end{spacing}
The two-stage model can be directly used in mission-critical applications where successfully completing a mission (one-period) is the primary concern.

\section{Structural properties of the two-stage model}
\label{secProp}
In this section, we establish three structural properties for P2. The first property provides an optimal solution to P2 based on the optimal solution without considering economic dependence. Because the optimal solution without considering economic dependence can be obtained easily, we can quickly identify the optimal solution to P2 when the condition in Proposition \ref{prop1} is satisfied. The second property establishes the condition when changing the decision(s) of certain component(s) from do-nothing to PM reduces the total maintenance cost.
The third property establishes the condition when changing the decision(s) of certain component(s) from  PM to do-nothing reduces the total maintenance cost. Propositions 2 and 3 are the theoretical foundation of Algorithm 1 that solves P2 optimally.

\begin{proposition}
\label{prop1}
If $\tilde{x}_{i,1}^\ast=1$ $\forall i \in \mathcal{N}$, then $x_{i,1}^\ast=1$ $\forall i \in \mathcal{N}$. $\qed$
\end{proposition}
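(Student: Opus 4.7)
Write $f$ for the P2 objective after eliminating $y_{i,1}$, $z_{1}$, and all second-stage subproblems through Equations \eqref{P2_opt1}--\eqref{P2_opt3}; then $f$ depends only on $x_{1}=(x_{1,1},\ldots,x_{n,1})$. The plan is to show $f(\mathbf{1})\le f(x_{1})$ for every feasible $x_{1}$. The hypothesis $\tilde{x}_{i,1}^{\ast}=1$ arises from $n$ decoupled single-component subproblems and is algebraically equivalent to
\begin{equation*}
c_{i,\text{pm}}+c_{\text{s}}\le (Q_{i}(g_{i,1},m)-Q_{i}(1,m))(c_{i,\text{cm}}+c_{\text{s}}),\qquad \forall i\in\mathcal{N},
\end{equation*}
which will serve as the central lever in the comparison.

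For any $x_{1}$ with $N_{1}=\{i:x_{i,1}=1\}\neq\emptyset$, the stage-1 setup $c_{\text{s}}$ is already paid, so I would use a flipping argument. Flipping some $j\in N_{0}$ from $0$ to $1$ changes $f$ by $c_{j,\text{pm}}-(Q_{j}(g_{j,1},m)-Q_{j}(1,m))\bigl[c_{j,\text{cm}}+c_{\text{s}}\prod_{i\neq j}(1-p_{i})\bigr]$, where $p_{i}$ is the current stage-2 failure probability of component $i$. Since $\prod_{i\neq j}(1-p_{i})\le 1$, the per-component hypothesis above makes this expression nonpositive (the algebra reduces to $s(1-\prod_{i\neq j}(1-p_{i}))\le 1$ with $s=Q_{j}(g_{j,1},m)-Q_{j}(1,m)\le 1$). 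Iterating over all $j\in N_{0}$ yields $f(\mathbf{1})\le f(x_{1})$.

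The main obstacle is the boundary case $x_{1}=\mathbf{0}$, where a single flip is no longer cost-decreasing because the stage-1 setup must be absorbed when switching to $\mathbf{1}$. I would handle this by summing the per-component inequality over $\mathcal{N}$ and regrouping $f(\mathbf{1})-f(\mathbf{0})$, which reduces the claim to the scalar bound
\begin{equation*}
\sum_{i\in\mathcal{N}}Q_{i}(g_{i,1},m)+\prod_{i\in\mathcal{N}}(1-Q_{i}(g_{i,1},m))\le (n-1)+\sum_{i\in\mathcal{N}}Q_{i}(1,m)+\prod_{i\in\mathcal{N}}(1-Q_{i}(1,m)).
\end{equation*}
Writing $h(u)=\sum_{i}u_{i}+\prod_{i}(1-u_{i})$ with $a_{i}=Q_{i}(g_{i,1},m)$ and $b_{i}=Q_{i}(1,m)$, this is just $h(a)-h(b)\le n-1$. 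I would close by observing that $1\le h(u)\le n$ on $[0,1]^{n}$: the upper bound follows from $\prod_{i}v_{i}\le\sum_{i}v_{i}$ with $v_{i}=1-u_{i}$, and the lower bound from the Weierstrass inequality $\prod_{i}(1-u_{i})\ge 1-\sum_{i}u_{i}$ (together with $h(u)\ge\sum_{i}u_{i}\ge 1$ when $\sum_{i}u_{i}\ge 1$). Combining the two cases yields $f(\mathbf{1})\le f(x_{1})$ for every feasible $x_{1}$, so $x_{i,1}^{\ast}=1$ for every $i\in\mathcal{N}$.
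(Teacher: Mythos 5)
Your proof is correct, and its skeleton matches the paper's: both split on whether the first-stage setup cost is already incurred, and both reduce the all-or-nothing comparison $f(\mathbf{1})$ vs.\ $f(\mathbf{0})$ to the same scalar fact about $h(u)=\sum_i u_i+\prod_i(1-u_i)$ (the paper derives $1\le h\le n$ by noting $\partial h/\partial u_i\ge 0$ and evaluating at the extreme points $u=\mathbf{0}$ and $u=\mathbf{1}$; your Weierstrass/term-domination bounds prove the identical inequalities directly). Where you genuinely diverge is the treatment of mixed partitions: the paper compares $C_1$ (maintain all) against an arbitrary mixed $C_2$ in one shot, matching three groups of terms, whereas you run an iterative single-component exchange, showing each flip of some $j\in N_0$ is cost-nonincreasing because $\prod_{i\ne j}(1-p_i)\le 1$ and $Q_j(g_{j,1},m)-Q_j(1,m)\le 1$. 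Your flip computation and the reduction to $\delta_j\bigl(1-\prod_{i\ne j}(1-p_i)\bigr)\le 1$ check out, and the argument survives iteration since the bound $\prod_{i\ne j}(1-p_i)\le 1$ is state-independent; this exchange view is arguably more modular and foreshadows Proposition~2 with $|N|=1$. Two small points you should make explicit: (i) your ``central lever'' inequality characterizes $\tilde{x}_{i,1}^\ast=1$ only for functioning components --- for a failed component $\tilde{x}^\ast=1$ because CM is forced, not because the inequality holds --- so you should note that feasibility already places failed components in $N_1$, that your flips only ever touch functioning components, and that the boundary case $x_1=\mathbf{0}$ is feasible only when no component has failed (the paper isolates this as part~(2) of its proof, observing both costs shift by the same constant $\sum_{i\in N}(c_{i,\text{cm}}-c_{i,\text{pm}})$); (ii) the paper's optimality condition is a strict inequality, so your non-strict version still yields $f(\mathbf{1})\le f(x_1)$, which is all the proposition requires.
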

\begin{proof}
	See Appendix A.1.
\end{proof}

Proposition \ref{prop1} shows that it is optimal to maintain all components (i.e. $x_{i,1}^\ast=1$) if all components need to be maintained when ignoring economic dependence (i.e. $\tilde{x}_{i,1}^\ast=1$). The optimal maintenance decision without considering economic dependence $\tilde{x}_{i,1}^\ast$ of component  $i \in \mathcal{N}$ can be obtained easily as follows:
\begin{equation}
\tilde{x}_{i,1}^\ast=
\left\{
\begin{array}{ll}
1, & \text{when} \; Q_i(g_{i,1},m) > \dfrac{c_{i,\text{pm}+c_\text{s}}}{c_{i,\text{cm}+c_\text{s}}}+Q_i(1,m)\; \text{or} \; g_{i,1}=m,\\
\specialrule{0em}{1ex}{1ex}
0, & \text{otherwise}.
\end{array}
\right.
\end{equation}

Proposition \ref{prop1} leads to an optimal solution to P2 when $\tilde{x}_{i,1}^\ast=1$ for all components $i \in \mathcal{N}$. However, the condition of $\tilde{x}_{i,1}^\ast=1$ for all $i \in \mathcal{N}$ is a special scenario. Next, we explore more general structural properties of the two-stage model.

\begin{definition}
A partition $(N_0,N_1)$ of set $\mathcal{N}$, i.e., $N_0 \cup N_1=\mathcal{N}$ and $N_0 \cap N_1 = \emptyset$, is a solution to P2, where $N_0$ is the do-nothing set that collects all components that are not maintained at the first stage, i.e., $N_0 = \{i|x_{i,1} = 0, i \in \mathcal{N}\}$ and $N_1$ is the maintenancce set that includes all components that are maintained at the first stage, i.e., $N_1 = \{i|x_{i,1} = 1,i \in \mathcal{N}\}$ . $\qed$
\end{definition}

A partition $(N_0,N_1)$ of $\mathcal{N}$ is \textit{feasible} if every failed component at the first stage belongs to $N_1$. Therefore, determining the optimal $x_{1}^\ast$ is now equivalent to find out a feasible and optimal partition $(N_0^\ast, N_1^\ast)$ of $\mathcal{N}$ that minimizes the total cost. Next, we give two propositions regarding how to improve a feasible partition $(N_0,N_1)$.

\begin{proposition}
\label{prop2}
Consider two feasible partitions  $(N_0,N_1)$ and $(N_0^\prime, N_1^\prime)$ of $\mathcal{N}$. Let  $C$ and $C^\prime$ be their respective total costs. If $N_1^\prime \backslash  N_1 = N$ and $N \neq \emptyset$, we have $C^\prime<C$ if and only if $\Delta_{\text{r}}(N_0, N_1, N)<1$, where
\begin{spacing}{0.5}
\begin{displaymath}
\Delta_{\text{r}}(N_0, N_1, N)=
\left\{
\begin{array}{ll}
\dfrac{\sum_{k \in N}\rho_k}{r_{N}}\times\dfrac{1}{p(N_0,N_1)}, & {\rm when} \; N_1 \neq \emptyset,\\
\specialrule{0em}{1ex}{1ex}
\dfrac{1+\sum_{k \in N}\rho_k}{r_{N}}\times\dfrac{1}{p(N_0,N_1)}, & {\rm otherwise}.
\end{array}
\right.
\end{displaymath} 
$$
r_N = \prod_{i \in N}\dfrac{1-Q_i(1,m)}{1-Q_i(g_{i,1},m)}-1,
$$ 
$$
\rho_k = \dfrac{c_{k,\text{pm}}-(Q_k(g_{k,1},m)-Q_k(1,m))c_{k,\text{cm}}}{c_\text{s}},
$$
and
$$
p(N_0,N_1) = \prod_{i \in N_0}(1-Q_i(g_{i,1},m))\prod_{i \in N_1}(1-Q_i(1,m))\qed
$$
\end{spacing}
\end{proposition}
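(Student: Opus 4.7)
The plan is to prove the ``if and only if'' by directly computing the cost gap $C-C'$ from the objective (\ref{P2O1}) together with the closed-form second-stage expected cost (\ref{P2V}), and then algebraically rearranging the inequality $C'<C$ into the ratio form that defines $\Delta_{\text{r}}$. Throughout I would abbreviate $q_i=Q_i(g_{i,1},m)$ and $q_i^\prime=Q_i(1,m)$ to keep expressions compact, and use the informal description preceding the proposition (that components are moved only from do-nothing to PM) to conclude that $N_0^\prime=N_0\setminus N$ and $N_1^\prime=N_1\cup N$.

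First I would split both $C$ and $C'$ into a first-stage direct cost (PM, CM, and setup) plus the expected second-stage cost $V$. Because $N\subseteq N_0$ and the partition $(N_0,N_1)$ is feasible, every component in $N$ is functioning, so moving it into $N_1^\prime$ converts it to a pure PM action; the first-stage direct cost therefore increases by exactly $\sum_{k\in N}c_{k,\text{pm}}$ in PM, by $c_{\text{s}}$ in setup only when $N_1=\emptyset$ (since $z_1$ flips $0\!\to\!1$), and by $0$ otherwise. This piecewise contribution is exactly what produces the two branches in the definition of $\Delta_{\text{r}}$.

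Next I would compute $V-V'$ from (\ref{P2V}). The linear CM part contributes $\sum_{k\in N}(q_k-q_k^\prime)c_{k,\text{cm}}$, while the setup term $(1-\prod_i(\cdot))c_{\text{s}}$ changes by
\[
\Bigl(\prod_{i\in N_0}(1-q_i)\prod_{i\in N_1}(1-q_i^\prime)\Bigr)\Bigl(\prod_{k\in N}\tfrac{1-q_k^\prime}{1-q_k}-1\Bigr)c_{\text{s}}.
\]
The first factor is precisely $p(N_0,N_1)$ and the second is exactly $r_N$, so $V-V'=\sum_{k\in N}(q_k-q_k^\prime)c_{k,\text{cm}}+p(N_0,N_1)\,r_N\,c_{\text{s}}$. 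Assembling everything, $C'<C$ becomes
\[
\sum_{k\in N}\bigl(c_{k,\text{pm}}-(q_k-q_k^\prime)c_{k,\text{cm}}\bigr)+[N_1=\emptyset]\,c_{\text{s}}\;<\;p(N_0,N_1)\,r_N\,c_{\text{s}},
\]
and dividing through by $c_{\text{s}}\,r_N>0$ (stochastic monotonicity of $Q_i$ yields $q_k\geq q_k^\prime$, strict for at least one $k\in N$ in the nondegenerate case) produces $\Delta_{\text{r}}(N_0,N_1,N)<1$ in both branches.

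The main obstacle I anticipate is the bookkeeping for the setup-term difference, specifically factoring out $p(N_0,N_1)$ cleanly so that the residual ratio is recognizable as $r_N$; the remainder is a routine chain of algebraic equivalences. Only the degenerate case $r_N=0$ (i.e., $q_k=q_k^\prime$ for every $k\in N$) requires a brief remark: there the moved components offer no expected benefit, so $C'\geq C$ strictly and the inequality $\Delta_{\text{r}}<1$ naturally fails, consistent with reading $\Delta_{\text{r}}$ as $+\infty$.
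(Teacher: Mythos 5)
Your proposal is correct and follows essentially the same route as the paper's proof: both compute the cost gap $C'-C$ directly from the first-stage cost plus the closed-form expected second-stage cost, factor $p(N_0,N_1)$ out of the difference of survival products so that the residual ratio is $r_N$, and split on whether $N_1=\emptyset$ to account for the setup-cost increment. Your added remark on the degenerate case $r_N=0$ and the reading $N_0'=N_0\setminus N$, $N_1'=N_1\cup N$ match the paper's implicit assumptions.
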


\begin{proof}
	See Appendix A.2.	
\end{proof}

Proposition \ref{prop2} helps to quickly identify a  set $N \subseteq N_0$ to improve the current partition $(N_0, N_1)$ by moving set $N$ from the do-nothing set to the maintenance set. $\Delta_{\text{r}}(N_0, N_1, N)$ consists of two parts: $\dfrac{\sum_{k \in N}\rho_k}{r_{N}}$ and  $p(N_0,N_1)$. The first part is determined by the components in set $N$ and the second part is the probability that all components will survive in the second stage given the current decision partition, i.e., $(N_0,N_1)$. The probability $p(N_0,N_1)$ increases as more components are maintained.

Let us first examine the condition in Proposition \ref{prop2} when $|N|=1$. Suppose $N=\{k\}$, $k \in N_0$, Proposition \ref{prop2}  provides the condition of improving the current partition by maintaining component $k$. When  $N_1 \neq \emptyset$, we have 
\begin{equation}
\label{EquationP2}
\begin{split}
\Delta_{\text{r}}(N_0,N_1,\{k\})=&\dfrac{(c_{k,\text{pm}}-(Q_k(g_{k,1},m)-Q_k(1,m))c_{k,\text{cm}})(1-Q_k(g_{k,1},m))}{c_{\text{s}}(Q_k(g_{k,1},m)-Q_k(1,m))p(N_0,N_1)}\\
\approx&\dfrac{(c_{k,\text{pm}}-Q_k(g_{k,1},m)c_{k,\text{cm}})(1-Q_k(g_{k,1},m))}{c_{\text{s}}Q_k(g_{k,1},m)p(N_0,N_1)}
\end{split}
\end{equation}
because $Q_k(1,m)$ is the state transition probability from the perfect state to the failed state and therefore $Q_k(1,m) \approx 0$ holds in many scenarios (e.g., when inspection interval is not long). Based on Equation (\ref{EquationP2}), we have several important observations: (1) $\Delta_{\text{r}}(N_0,N_1,\{k\})$ increases as $c_{k,{\rm pm}}$ increases. The increase in $\Delta_{\text{r}}$ indicates that it is less likely that we change the decision on $k$ from no maintenance to PM. This is because a higher PM cost makes it less cost-effective to perform PM at the first stage. (2)  $\Delta_{\text{r}}(N_0,N_1,\{k\})$ increases as $c_{k,\text{cm}}$ decreases. It is less incentive to perform PM on component $k$ with other components at the first stage when CM cost of component $k$ is lower. (3)  $\Delta_{\text{r}}(N_0,N_1,\{k\})$ increases as $c_{\text{s}}$ decreases, because the decrease of setup cost makes sharing setup cost at the first stage less cost-effective. And (4)  $\Delta_{\text{r}}(N_0,N_1,\{k\})$ increases as $g_{k,1}$ decreases, meaning a better component condition makes it less worthy to maintain the component at the first stage. Similar patterns can be observed when $|N|\geq 2$. Proposition \ref{prop2} also provides the maintenance action for a new component. Specifically, when $g_{k,1}=1$, $k \in N_0$, we have $\Delta_{\text{r}}(N_0,N_1,\{k\}) = +\infty > 1$, which implies we should never maintain a new component.

\begin{proposition}
	\label{prop3}
	Consider two feasible partitions  $(N_0,N_1)$ and $(N_0^\prime, N_1^\prime)$ of $\mathcal{N}$. Let  $C$ and $C^\prime$ be their respective total costs. If $N_0^\prime \backslash  N_0 = N$ and $N \neq \emptyset$, we have $C^\prime<C$ if and only if $\Delta_{\text{s}}(N_0, N_1, N)>1$, where
	\begin{displaymath}
	\Delta_{\text{s}}(N_0, N_1, N)=
	\left\{
	\begin{array}{ll}
	\dfrac{\sum_{k \in N}\rho_k}{s_{N}}\times\dfrac{1}{p(N_0,N_1)}, & {\rm when} \; N_1 \neq \emptyset,\\
	\specialrule{0em}{1ex}{1ex}
	\dfrac{1+\sum_{k \in N}\rho_k}{s_{N}}\times\dfrac{1}{p(N_0,N_1)}, & {\rm otherwise}.
	\end{array}
	\right.
	\end{displaymath} 
	$$
	s_N = 1-\prod_{i \in N}\dfrac{1-Q_i(g_{i,1},m)}{1-Q_i(1,m)},
	$$ 
	and values $\rho_k$ and $p(N_0,N_1)$ are the defined in Proposition \ref{prop2}. $\qed$
\end{proposition}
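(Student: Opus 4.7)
The plan is to mirror the proof of Proposition \ref{prop2}, only now tracking the cost \emph{savings} $C - C'$ obtained by moving a set $N \subseteq N_1$ out of the maintenance set and into the do-nothing set. Since both partitions are feasible and components are being moved out of $N_1$, no component in $N$ can be failed at stage 1; hence every $k \in N$ was receiving PM (not CM) under $(N_0, N_1)$, so $y_{k,1}=0$ both before and after, and the CM indicators at stage 1 contribute nothing to $C - C'$.

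First I would compute $C - C'$ term by term using the closed-form cost expression in (\ref{P2O1})--(\ref{P2V}). The first-stage PM cost decreases by exactly $\sum_{k \in N} c_{k,\text{pm}}$. The first-stage setup cost stays at $c_{\text{s}}$ when $N_1' = N_1 \setminus N$ remains nonempty, and drops by an additional $c_{\text{s}}$ when $N_1' = \emptyset$ (i.e.\ $N = N_1$); this is the source of the two-case split in the formula for $\Delta_{\text{s}}$.

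Next I would analyze the expected second-stage cost $V$. The only components whose second-stage failure probability changes are those in $N$, whose per-component failure probability increases from $Q_k(1,m)$ to $Q_k(g_{k,1},m)$; this produces an increase of $\sum_{k \in N}(Q_k(g_{k,1},m)-Q_k(1,m))c_{k,\text{cm}}$ in the expected CM cost. For the setup term in $V$, the system-wide survival probability changes from $p(N_0,N_1)$ to
\begin{equation*}
p(N_0',N_1') \;=\; p(N_0,N_1)\prod_{k \in N}\frac{1-Q_k(g_{k,1},m)}{1-Q_k(1,m)} \;=\; p(N_0,N_1)\,(1-s_N),
\end{equation*}
so the expected second-stage setup cost rises by $c_{\text{s}}\, p(N_0,N_1)\, s_N$.

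Collecting these pieces and dividing by $c_{\text{s}}$, the definition of $\rho_k$ gives
\begin{equation*}
\frac{C-C'}{c_{\text{s}}} \;=\; \sum_{k \in N}\rho_k \;-\; p(N_0,N_1)\,s_N \;+\; \delta,
\end{equation*}
where $\delta = 1$ if $N_1' = \emptyset$ and $\delta = 0$ otherwise. Rearranging $C-C'>0$ yields exactly $\Delta_{\text{s}}(N_0,N_1,N)>1$ in each case. The step I expect to require the most care is the isolation of the setup-cost factor in $V$: because $p(N_0,N_1)$ is a product over all of $\mathcal{N}$, one must carefully split off the unchanged factors indexed by $\mathcal{N}\setminus N$ in order to expose the clean ratio $1-s_N$; the remaining work is bookkeeping and algebraic rearrangement parallel to the proof of Proposition \ref{prop2}.
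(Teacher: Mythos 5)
Your proposal is correct and follows essentially the same route as the paper's proof: a direct computation of $C-C'$ from the closed-form cost, which collapses to $c_{\text{s}}\bigl(\sum_{k\in N}\rho_k+\delta-p(N_0,N_1)s_N\bigr)$ with $\delta$ accounting for whether the first-stage setup cost disappears. You also correctly identify that the two-case split is governed by whether $N_1'=N_1\setminus N$ is empty (as in the paper's appendix and Equation~(\ref{EquationP3})), and your explicit observation that feasibility forces the components of $N$ to be unfailed at stage~1 is a small but welcome clarification of a point the paper leaves implicit.
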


\begin{proof}
	See Appendix A.3.	
\end{proof}

Proposition \ref{prop3} helps to quickly identify a set $N \subseteq N_1$ to improve the current partition $(N_0, N_1)$ by moving set $N$ from the maintenance set to the do-nothing set. Note that in contrast to considering  $N \subseteq N_0$ in Proposition \ref{prop2}, Proposition \ref{prop3} considers $N \subseteq N_1$.

We similarly first investigate the condition in Proposition \ref{prop3} when $|N|=1$. Suppose $|N|=\{k\}$, $k \in N_1$, Proposition \ref{prop3} establishes the condition of improving the current partition by not maintaining component $k$. When  $N_1^\prime \neq \emptyset$, we have 
\begin{equation}
\label{EquationP3}
\begin{split}
\Delta_{\text{s}}(N_0,N_1,\{k\})=&\dfrac{(c_{k,\text{pm}}-(Q_k(g_{k,1},m)-Q_k(1,m))c_{k,\text{cm}})(1-Q_k(1,m))}{c_{\text{s}}(Q_k(g_{k,1},m)-Q_k(1,m))p(N_0,N_1)}\\
\approx&\dfrac{c_{k,\text{pm}}-Q_k(g_{k,1},m)c_{k,\text{cm}}}{c_{\text{s}}Q_k(g_{k,1},m)p(N_0,N_1)}
\end{split}
\end{equation}
because $Q_k(1,m) \approx 0$ in many scenarios. Examining Equation (\ref{EquationP3}), we observe similar patterns regarding whether changing a component from PM to do-nothing reduces the total maintenance costs  as the ones we see from Equation (\ref{EquationP2}).

\begin{corollary}
\label{cor1}
Let $(N_0 \cup N_{\rm u},N_1)$ and $(N_0,N_1 \cup N_{\rm u})$ be two feasible partitions of $\mathcal{N}$. For any set $N \subseteq N_{\rm u}$ and $N \neq \emptyset$, we have $\Delta_{\text{r}}(N_0 \cup N_{\rm u}, N_1, N) \geq \Delta_{\text{s}}(N_0, N_1 \cup N_{\rm u}, N)$, where  equality holds when  $N = N_{\rm u}$. $\qed$
\end{corollary}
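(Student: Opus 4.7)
The plan is to compute the ratio $\Delta_{\text{r}}(N_0 \cup N_{\rm u}, N_1, N)/\Delta_{\text{s}}(N_0, N_1 \cup N_{\rm u}, N)$ directly from the formulas in Propositions \ref{prop2} and \ref{prop3} and show it reduces to a product of per-component survival ratios that is manifestly at least one. First, I introduce shorthand $a_i := 1 - Q_i(g_{i,1}, m)$ and $b_i := 1 - Q_i(1, m)$, and for any $S \subseteq \mathcal{N}$ write $\alpha_S := \prod_{i \in S} b_i/a_i$. Since state $1$ is the best state, monotone deterioration gives $Q_i(g_{i,1}, m) \geq Q_i(1, m)$, so $b_i \geq a_i > 0$ and hence $\alpha_S \geq 1$ with $\alpha_{\emptyset} = 1$.

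Next, I rewrite the key quantities in this shorthand. From the definitions, $r_N = \alpha_N - 1$ and $s_N = 1 - 1/\alpha_N = (\alpha_N - 1)/\alpha_N$, so $s_N / r_N = 1/\alpha_N$. Also, because $p(N_0, N_1 \cup N_{\rm u})$ and $p(N_0 \cup N_{\rm u}, N_1)$ agree on all factors outside $N_{\rm u}$ and contribute $b_i$ versus $a_i$ on $N_{\rm u}$, we have $p(N_0, N_1 \cup N_{\rm u}) / p(N_0 \cup N_{\rm u}, N_1) = \alpha_{N_{\rm u}}$.

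In the main case $N_1 \neq \emptyset$, both $\Delta_{\text{r}}$ and $\Delta_{\text{s}}$ use their top-branch formulas and the $\sum_{k \in N} \rho_k$ numerators cancel, so
\[
\frac{\Delta_{\text{r}}(N_0 \cup N_{\rm u}, N_1, N)}{\Delta_{\text{s}}(N_0, N_1 \cup N_{\rm u}, N)} = \frac{s_N}{r_N} \cdot \frac{p(N_0, N_1 \cup N_{\rm u})}{p(N_0 \cup N_{\rm u}, N_1)} = \frac{\alpha_{N_{\rm u}}}{\alpha_N} = \alpha_{N_{\rm u} \setminus N} \geq 1,
\]
with equality when $N = N_{\rm u}$ because $\alpha_{\emptyset} = 1$. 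For the edge case $N_1 = \emptyset$ with $N = N_{\rm u}$, $\Delta_{\text{r}}$ uses the $1 + \sum \rho_k$ branch (current maintenance set is empty) and $\Delta_{\text{s}}$ also uses its alternative branch (the post-move maintenance set $N_1 \cup (N_{\rm u} \setminus N)$ is empty); the identical numerators cancel and the same algebraic identity $1/\alpha_{N_{\rm u}} \cdot \alpha_{N_{\rm u}} = 1$ gives equality.

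The main obstacle will be spotting the clean identity $s_N/r_N = 1/\alpha_N$ and the complementary identity for the probability ratio, which together cause $\alpha_N$ to cancel down to $\alpha_{N_{\rm u} \setminus N}$; once these are in hand the rest is direct algebra. The only bookkeeping subtlety is verifying that the piecewise branches of $\Delta_{\text{r}}$ and $\Delta_{\text{s}}$ match in the edge cases so that the $\rho_k$-dependent numerators cancel consistently.
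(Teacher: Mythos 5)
Your core computation is exactly the paper's, just packaged differently: the identities $s_N/r_N = 1/\alpha_N$ and $p(N_0, N_1 \cup N_{\rm u})/p(N_0 \cup N_{\rm u}, N_1) = \alpha_{N_{\rm u}}$ are precisely the paper's opening step, which shows $p(N_0 \cup N_{\rm u}, N_1)\,r_N = p(N_0 \cup N_{\rm u}\backslash N, N_1 \cup N)\,s_N \leq p(N_0, N_1 \cup N_{\rm u})\,s_N$ with equality when $N = N_{\rm u}$; and your two branch cases ($N_1 \neq \emptyset$, and $N_1 = \emptyset$ with $N = N_{\rm u}$) match the paper's cases (1) and (3), including the reading of $\Delta_{\text{s}}$'s branch condition via the post-move maintenance set.

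The gap is that your case analysis omits $N_1 = \emptyset$ with $N \subset N_{\rm u}$, $N \neq N_{\rm u}$. There $\Delta_{\text{r}}(N_0 \cup N_{\rm u}, \emptyset, N)$ takes the $1+\sum_{k \in N}\rho_k$ branch (its maintenance-set argument is empty), while $\Delta_{\text{s}}(N_0, N_{\rm u}, N)$ takes the $\sum_{k \in N}\rho_k$ branch (the post-move maintenance set $N_{\rm u}\backslash N$ is nonempty), so the numerators do \emph{not} cancel and your ratio reduction does not apply; the sentence asserting that the branches ``match in the edge cases'' is exactly what fails here. The paper treats this as its own case (2) and obtains a strict inequality by combining $1+\sum_{k\in N}\rho_k > \sum_{k\in N}\rho_k$ with the same denominator inequality; the patch is one line inside your framework, but it is missing. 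A smaller caveat: arguing via the ratio $\Delta_{\text{r}}/\Delta_{\text{s}}$ tacitly assumes $\sum_{k\in N}\rho_k > 0$ (the ratio is undefined when the sum is zero and the direction of the conclusion flips when it is negative); the paper's direct-inequality version makes the same tacit sign assumption, so this is not a defect relative to the paper, but it is worth acknowledging if you keep the ratio formulation.
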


\begin{proof}
	See Appendix B.1.
\end{proof}
Corollary \ref{cor1} shows that any set $N \subset N_{\rm u}$ satisfies either  Proposition \ref{prop2} or Proposition \ref{prop3} or none. When $N=N_{\rm u}$, set $N$ satisfies either Proposition 2 or Proposition 3. This corollary is needed to prove Proposition \ref{prop4} in the next section. 

\section{Solution algorithms}
\label{secAlg}
Based on Propositions \ref{prop2} and \ref{prop3}, we  design  Algorithm 1 that finds the optimal partition $(N_0^\ast,N_1^\ast)$ for P2. Although the computational studies in the next section show that Algorithm 1 is fast for most test cases, the time complexity of Algorithm 1 is $O(2^n)$ in the worst case scenario. Therefore, we develop Algorithm 2  to heuristically search a better solution based on the results from the early termination of Algorithm 1. We further use the two-stage model and rolling horizon technique to approximate the multi-stage problem P1.

\subsection{Algorithm 1}

Propositions \ref{prop2} and \ref{prop3} help to find a better solution given any feasible solution. However, they do not necessarily lead any feasible solution to an optimal one. Next we show that if Propositions \ref{prop2} and \ref{prop3} are applied following a certain procedure, an optimal solution can be obtained.

Let $N_{\rm u}$ be the undetermined set in which all components' first-stage decisions are not determined. Constructing an optimal partition $(N_0,N_1)$ implies optimally moving all subsets $N \subseteq N_{\rm u}$ to $N_0$ or $N_1$. The proposed procedure starts from searching all subsets with $|N|=1$ and increases the cardinality of $N$ by 1 until some $N$ is moved to $N_1$ based on Proposition \ref{prop2} or $N_0$ based on Proposition \ref{prop3}. Because the conditions of Propositions \ref{prop2} and \ref{prop3} change after moving $N$, the search restarts from $|N|=1$. This process is repeated until $N_{\rm u}=\emptyset$. It can be easily verified that $2^n$ sets need to be examined in the worst case scenario.

\renewcommand{\baselinestretch}{1} \normalsize
\begin{algorithm}[h]
	\caption{Determining an optimal partition $(N_0^\ast,N_1^\ast)$ for P2}
	\begin{algorithmic}[1]
		\Require
		Component set $\mathcal{N}$;\par
		PM cost $c_{i,\text{pm}}$ and CM cost $c_{i,\text{cm}}$ $\forall i \in \mathcal{N}$ and setup cost $c_\text{s}$;\par
		Component state $g_{i,1}$ $\forall i \in \mathcal{N}$\par
		State transition probability $Q_i(1,m)$ and $Q_i(g_{i,1},m)$ $\forall i \in \mathcal{N}$.
		\Ensure
		Optimal partition $(N_0^\ast,N_1^\ast)$ of $\mathcal{N}$
		\State 	Initial  $N_0^\ast \gets \emptyset$, $N_1^\ast \gets \emptyset$, $j \gets 1$, undetermined set $N_{\rm u} \gets \mathcal{N}$;
		\For{$i \in \mathcal{N}$}// \textit{maintain all failed components}
		\If {$g_{i,1} = m$}
		\State $N_1^\ast = N_1^\ast\cup \{i\}$,$N_{\rm u} = N_{\rm u} \backslash \{i\}$;
		\EndIf
		\EndFor
		\While {$N_{\rm u} \neq \emptyset$} // \textit{Keep moving $N \subseteq N_{\rm u}$ to $N_0^\ast$ and $N_1^\ast$ until $N_{\rm u} = \emptyset$}
		\State  $N_0 \gets \emptyset$, $N_1 \gets \emptyset$, $u \gets |N_{\rm u}|$;
		\State $N^j \gets$ all subsets of $N_{\rm u}$ with cardinality $j$;
		\For {each set $N \in N^j$}
		\If {$\Delta_{\text{r}}(N_0^\ast \cup N_{\rm u}, N_1^\ast, N)<1$} 
		\State $N_1 \gets N_1 \cup  N$; //\textit{$N$ satisfies Proposition \ref{prop2}, move $N$ to $N_1$}
		\ElsIf {$\Delta_{\text{s}}(N_0^\ast, N_1^\ast \cup N_{\rm u}, N) \geq 1$}
		\State $N_0 \gets N_0 \cup  N$;  //\textit{$N$ satisfies Proposition \ref{prop3}, move $N$ to $N_0$}
		\EndIf
		\EndFor	
		\State 	$N_{\rm u} \gets N_{\rm u}\backslash (N_1 \cup N_0)$, $N_0^\ast \gets N_0^\ast \cup N_0$, $N_1^\ast \gets N_1^\ast \cup N_1$; 
		\If {$u > |N_{\rm u}|$} //\textit{ If $N_{\rm u}$ is reduced }
		\State $j \gets 1$; //\textit{Search $N$ from $|N|=1$.}
		\Else
		\State $j \gets j+1$;//\textit{Search $N$ at a higher cardinality. }
		\EndIf
		\EndWhile\\
		\Return $(N_0^\ast,N_1^\ast)$;
	\end{algorithmic}
\end{algorithm}
\renewcommand{\baselinestretch}{1.5} \normalsize

Specifically, we initialize $N_0=\emptyset$, $N_1$ to include all failed components to ensure the feasibility, and $N_{\rm u}=\mathcal{N}\backslash N_1$. If there is any subset $N \subseteq N_{\rm u}$ with $|N|=1$ satisfies Proposition \ref{prop2} (Proposition \ref{prop3}), we move $N$ from $N_{\rm u}$ to $N_1$  ($N_0$) after all subsets with $|N|=1$ are searched. If there is no subset $N$ with $|N|=1$ satisfies Propositions \ref{prop2} or \ref{prop3}, we search the subsets $N$ with $|N|=2,3,...,|N_{\rm u}|$ in an ascending order until some $N$ satisfying the condition in Propositions \ref{prop2} or \ref{prop3} is obtained and moved out of $N_{\rm u}$. We then update $N_{\rm u}$ and restart to search the subset $N$ from $|N|=1$. The construction of the optimal partition $(N_0,N_1)$ terminates when $N_{\rm u}=\emptyset$. The optimality of partition $(N_0,N_1)$ given by Algorithm 1 is proved in Proposition \ref{prop4}.

\begin{proposition}
	\label{prop4}
	The partition $(N_0^\ast,N_1^\ast)$ given by Algorithm 1 is an optimal partition. $\qed$
\end{proposition}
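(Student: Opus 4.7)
The plan is to establish Proposition \ref{prop4} in two parts: finite termination of Algorithm 1, and optimality of the partition $(N_0^\ast,N_1^\ast)$ that it returns. My approach is to maintain through the iterations an invariant saying that the committed partition is always extendable to a full optimal partition, and then observe that when $N_{\rm u}=\emptyset$ the committed partition is itself optimal.

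For termination, I would argue that every execution of the while loop either strictly decreases $|N_{\rm u}|$ (and resets $j$ to $1$) or increments $j$ by one. The only way the algorithm could fail to terminate would be for $j$ to grow without $|N_{\rm u}|$ ever shrinking, but this is ruled out by Corollary~\ref{cor1}: when $j=|N_{\rm u}|$ the only candidate set is $N=N_{\rm u}$ itself, and the corollary gives $\Delta_{\rm r}(N_0^\ast\cup N_{\rm u},N_1^\ast,N_{\rm u})=\Delta_{\rm s}(N_0^\ast,N_1^\ast\cup N_{\rm u},N_{\rm u})$. Their common value is either strictly less than $1$ (triggering Proposition~\ref{prop2} and moving $N_{\rm u}$ into $N_1^\ast$) or at least $1$ (triggering Proposition~\ref{prop3} and moving $N_{\rm u}$ into $N_0^\ast$). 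Hence $|N_{\rm u}|$ must strictly decrease within at most $|N_{\rm u}|$ inner iterations, and termination follows in at most $2^n$ subset examinations overall.

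For optimality I would use induction on iterations of the while loop with the invariant: there exists an optimal partition $(N_0^{\rm opt},N_1^{\rm opt})$ of $\mathcal{N}$ such that $N_0^\ast\subseteq N_0^{\rm opt}$ and $N_1^\ast\subseteq N_1^{\rm opt}$. The base case is immediate because $N_0^\ast$ is empty and the initial $N_1^\ast$ consists only of failed components, which constraint~(\ref{P2C1_2}) forces into every feasible maintenance set. For the inductive step, consider an iteration that moves some $N\subseteq N_{\rm u}$ of cardinality $j$ into $N_1^\ast$ because $\Delta_{\rm r}(N_0^\ast\cup N_{\rm u},N_1^\ast,N)<1$. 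Starting from an optimal partition $(N_0^{\rm opt},N_1^{\rm opt})$ extending $(N_0^\ast,N_1^\ast)$, I would construct an equally optimal partition containing $N$ entirely in the maintenance set; the driver is that $p(N_0,N_1)$ is monotone non-decreasing as components shift from $N_0$ to $N_1$ (since $Q_i(1,m)\leq Q_i(g_{i,1},m)$), so $\Delta_{\rm r}(N_0^{\rm opt},N_1^{\rm opt},\cdot)\leq \Delta_{\rm r}(N_0^\ast\cup N_{\rm u},N_1^\ast,\cdot)$ and the algorithm's strict guard carries over to the genuine optimality baseline. A symmetric argument using Proposition~\ref{prop3} handles moves into $N_0^\ast$, and when the loop exits with $N_{\rm u}=\emptyset$ the invariant forces $(N_0^\ast,N_1^\ast)=(N_0^{\rm opt},N_1^{\rm opt})$.

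The hard part is the exchange step in the inductive argument, specifically reconciling the baseline $(N_0^\ast\cup N_{\rm u},N_1^\ast)$ used by the algorithm's guard with the actual baseline $(N_0^{\rm opt},N_1^{\rm opt})$ at which Proposition~\ref{prop2} must be invoked. Because $(N_0^{\rm opt},N_1^{\rm opt})$ may already place some components of $N$ in its maintenance set, Proposition~\ref{prop2} does not apply verbatim to the partial subset $N\cap N_0^{\rm opt}$, and the value $\Delta_{\rm r}$ is not obviously monotone in the moved set $N$. The intended bridge is that the algorithm only moves $N$ after exhausting all proper subsets of cardinality less than $|N|$ without a trigger; combining this exclusion with Corollary~\ref{cor1} applied to $N\cap N_0^{\rm opt}$ and $N\cap N_1^{\rm opt}$ lets one show that swapping $(N\cap N_0^{\rm opt})\leftrightarrow(N\cap N_1^{\rm opt})$ across the optimal partition cannot strictly worsen the cost, producing a new optimal partition with $N\subseteq N_1^{\rm opt}$ and preserving the invariant.
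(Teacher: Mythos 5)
Your proof takes a genuinely different route from the paper's. The paper fixes the algorithm's output $(N_0^\ast,N_1^\ast)$ and an arbitrary feasible partition $(N_0^\prime,N_1^\prime)$, writes them as $(N_0\cup N_a,N_1\cup N_b)$ and $(N_0\cup N_b,N_1\cup N_a)$, and compares both against the intermediate partition $(N_0\cup N_a\cup N_b,N_1)$ by replaying, in the order Algorithm 1 made them, the moves that put $N_b$ into $N_1^\ast$. You instead carry a loop invariant (``the committed partial partition extends to some optimal partition'') and perform an exchange at every move; your use of Corollary \ref{cor1} at $j=|N_{\rm u}|$ to force $|N_{\rm u}|$ to shrink is a clean termination argument that the paper leaves implicit. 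Both schemes are viable, and both ultimately rest on the same technical computation.

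That computation is exactly the ``hard part'' you flag, and your sketch of it is not yet a proof; two things need fixing. First, the operation is not a swap of $N\cap N_0^{\rm opt}$ with $N\cap N_1^{\rm opt}$: for a move triggered by Proposition \ref{prop2} you must show that the \emph{one-sided} move of $N'=N\cap N_0^{\rm opt}$ into the maintenance set of $(N_0^{\rm opt},N_1^{\rm opt})$ cannot increase cost, whence optimality forces either $N'=\emptyset$ or a cost-neutral relocation. Second, Corollary \ref{cor1} does not deliver this (it compares $\Delta_{\text{r}}$ and $\Delta_{\text{s}}$ at the two extreme baselines, not $\Delta_{\text{r}}$ of a set against $\Delta_{\text{r}}$ of its subsets); what does deliver it is the telescoping identity
\begin{equation*}
r_N-r_{N\setminus N'}=\Bigl(\prod_{i\in N\setminus N'}\tfrac{1-Q_i(1,m)}{1-Q_i(g_{i,1},m)}\Bigr)\, r_{N'},
\end{equation*}
combined with $\Delta_{\text{r}}(N_0^\ast\cup N_{\rm u},N_1^\ast,N)<1$ and $\Delta_{\text{r}}(N_0^\ast\cup N_{\rm u},N_1^\ast,N\setminus N')\geq 1$ (the latter because no proper subset triggered at lower cardinality). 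Subtracting gives $\sum_{k\in N'}\rho_k<r_{N'}\,p(\cdot)$ evaluated at the baseline in which $N\setminus N'$ has already been shifted to the maintenance side, and only then does the monotonicity of $p$ that you invoke transport the strict inequality to $(N_0^{\rm opt},N_1^{\rm opt})$. This is precisely Step 4 in part (1) of the paper's proof. Once you import that step (and its mirror image for Proposition \ref{prop3} moves, minding the non-strict threshold $\Delta_{\text{s}}\geq 1$, the $N_1=\emptyset$ branches of the definitions, and the fact that several sets of the same cardinality may be relocated in one pass), your induction closes.
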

\begin{proof}
	See Appendix A.4.	
\end{proof}

\subsection{Algorithm 2}
As stated previously, Algorithm 1 requires to examine $2^n$ sets in the worst case scenario. To ensure that we obtain a high-quality solution in a reasonable amount of time, we develop Algorithm 2 that heuristically finds a sub-optimal solution based on Algorithm 1. 

\renewcommand{\baselinestretch}{1} \normalsize
\begin{algorithm}[]
	\caption{Heuristic algorithm for P2}
	\begin{algorithmic}[1]
		\Require
		$N_0^\ast$ and $N_1^\ast$ $\gets$ results from Algorithm 1 by setting $J$ as the maximum set cardinality to search;\par
		Maximum number of partitions $M$; \par
		\Ensure
		Sub-optimal partition $(N_0^\prime,N_1^\prime)$ of $\mathcal{N}$.
		\State $N_{\rm u} \gets \mathcal{N} \backslash (N_0^\ast \cup N_1^\ast)$;
		\State Randomly generate $M$ partitions $(N_0^\ast\cup \bar{N}_0, N_1^\ast\cup \bar{N}_1)$, including $(N_0^\ast\cup N_{\rm u},N_1^\ast)$ and $(N_0^\ast, N_1^\ast\cup N_{\rm u})$, where  $\bar{N}_0 \cup \bar{N}_1 = N_{\rm u}$;
		\State Select the best partition $(N_0^\prime,N_1^\prime)$ that has the minimum cost among $M$ partitions;\\
		\Return $(N_0^\prime,N_1^\prime)$;
	\end{algorithmic}
\end{algorithm}
\renewcommand{\baselinestretch}{1.5} \normalsize
Specifically, we first terminate Algorithm 1 after the cardinality of $|N|$ exceeds the maximum cardinality $J$ specified, which means we only search the component set that has no more than $J$ components. Based on  $N_0^\ast$ and $N_1^\ast$ obtained from early termination of Algorithm 1, we randomly generate $M$ partitions $(\bar{N_0},\bar{N_1})$ of undetermined set $N_{\rm u} = \mathcal{N} \backslash (N_0^\ast \cup N_1^\ast)$, and select the best partition $(N_0^\ast \cup \bar{N_0},N_1^\ast \cup \bar{N_1})$ of $\mathcal{N}$. Note that it is suggested to include the options of maintaining all and none of undetermined components as candidate solutions, because many of our experiments show that it is likely that the optimal partition is either $(N_0^\ast\cup N_{\rm u},N_1^\ast)$ or $(N_0^\ast,N_1^\ast\cup N_{\rm u})$. 

\subsection{Algorithm 3}
We further use P2 to approximate the multi-stage model (P1) by utilizing the rolling horizon technique. At each decision period $t$, we solve P2 which consists of periods $t$ and $t + 1$ using Algorithm 2 and employ the first-stage solutions as the decisions for period $t$. The states components transition to in the next period (i.e., period $t + 1$) are determined by the decision made in the previous period (i.e., period $t$) and the transition probabilities. We then solve a new P2 consisting of decision periods $t +1$ and $t + 2$ and use the first-stage solution as the decisions for period $t + 1$. This process is repeated until the last decision period is reached. This procedure is summarized in Algorithm 3.

\renewcommand{\baselinestretch}{1} \normalsize
\begin{algorithm}[]
	\caption{Solving multi-stage model  using a rolling-horizon approach}
	\begin{algorithmic}[1] 
		\Ensure
		A solution over planning horizon $\mathcal{T}$
		\For {$t \in \mathcal{T}\backslash\{T\}$}
		\State Solve P2 using Algorithm 2, and obtain the first-stage solutions $x_{it},y_{it}$ and $z_t$ for all components $i \in \mathcal{N}$;
		\EndFor
		\State The last stage solutions $x_{iT},y_{iT}$ and $z_T$ for all components $i \in \mathcal{N}$ are given by Equations (\ref{P2_opt1}) to (\ref{P2_opt3});\\
		\Return $x_{it},y_{it}$ and $z_t$ for all components $i \in \mathcal{N}$ at all stages $t \in \mathcal{T}$.
	\end{algorithmic}
\end{algorithm}
\renewcommand{\baselinestretch}{1.5} \normalsize

In many real applications, when more degradation information becomes available upon inspection at each decision period, Bayesian updating can be easily performed to obtain more accurate degradation distributions and better-informed maintenance decisions can be made.

\section{Computational study}\label{sectionCom}
In this section, we first linearize P2 so that  small-scale problems of P2 can be solved by commercial solvers such as CPLEX for comparison purposes. We then conduct computational studies to examine the performance of Algorithms 1 and 2. The proposed models and algorithms are then illustrated by two real-world cases.

\subsection{Linearization of P2}\label{sectionLin}
In Equation (\ref{P2V}), the term $\prod_{i \in \mathcal{N}}(1-Q_{i}(g_{i,1},m)(1-x_{i,1})-Q_{i}(1,m)x_{i,1})$ is non-linear, which is linearized first. The term $\prod_{i \in \mathcal{N}}(1-Q_{i}(g_{i,1},m)(1-x_{i,1})-Q_{i}(1,m)x_{i,1})$ can be expanded to a polynomial function of $x_{i,1},i \in \mathcal{N}$, with degree of $n$. After the expansion, we observe that all non-linear terms are  the products of multiple (from 2 to $n$) binary decision variables $x_{i,1},i \in \mathcal{N}$. Standard linearization method for the multiplication of multiple binary variables are applied here \cite{rardin1998optimization}. After linearization, we replace Equation (\ref{P2V}) by Equation (\ref{P2VL}) in model P2,

\begin{equation}
\label{P2VL}
\begin{split}
	V=&\sum_{i \in \mathcal{N}}(Q_{i}(g_{i,1},m)(1-x_{i,1})+
	Q_{i}(1,m)x_{i,1}))c_{i,\text{cm}} \\
	&+(1-\sum_{j=0}^{1}\sum_{k=1}^{|N^j|}\prod_{i \in N_k^j}\prod_{r \in \mathcal{N}\backslash N_k^j}a_ix_{i,1}b_r-
	\sum_{j=2}^{n}\sum_{k=1}^{|N^j|}u_k^j\prod_{i \in N_k^j}\prod_{r \in \mathcal{N}\backslash N_k^j}a_ib_r)c_{\text{s}}
\end{split}
\end{equation}
where
\begin{spacing}{0.5}
\begin{equation}
\label{P2LC1}
b_{i}=1-Q_i(g_{i,1},m), i \in \mathcal{N}
\end{equation}
\begin{equation}
\label{P2LC2}
a_{i}=Q_i(g_{i,1},m)-Q_i(1,m),i \in \mathcal{N}
\end{equation}
\begin{equation}
\label{P2LC3}
u_j^k \leq x_{i,1}, j \in \{2,...,n\}, k \in \{1,...,|N^j|\}, i \in N_k^j
\end{equation}
\begin{equation}
\label{P2LC4}
u_j^k \geq \sum_{i \in N_k^j}x_{i,1}-(j-1), j \in \{2,...,n\}, k \in \{1,...,|N^j|\}
\end{equation}  
\begin{equation}
\label{P2LC5}
u_j^k \in \{0,1\}, j \in \{2,...,n\}, k \in \{1,...,|N^j|\}.
\end{equation}  
\end{spacing}
\vskip 0.5 cm

Note that set $N^j$ collects all subsets of $\mathcal{N}$ that have cardinality $j$ and therefore  $|N^j|=\dbinom{n}{j}$, $j \in \{0,1,2,...,n\}$.  For each set $N_k^j \in N^j, k \in \{1,...,|N^j|\}$, we have $N_k^j \subseteq \mathcal{N}$ and $|N_k^j|=j$.

\subsection{Computational studies}
We first compare the computational time of Algorithm 1 with CPLEX for small-scale problems. We then examine the computational time and cost error of Algorithms 1 and 2 for large-scale problems.

We assume the degradation of all components can be described by gamma processes with shape parameter $\alpha t$ and rate parameter $\gamma$. The continuous degradation levels are divided into several intervals to represent different states, and the transition probabilities can be computed accordingly. Without loss of generality, we assume inspection interval is 1. We arbitrarily set $M=100$, which is the maximum partitions generated in Algorithm 2. We consider systems with different number of components $n \in \{10,11,...,19\}$. For each $n$, we consider 100 instances with different combinations of degradation processes, and costs of PM and CM. The degradation parameters, and the costs of PM and CM are drawn from uniform distributions $U(\cdot,\cdot)$. Therefore, a total of 10,000 experiments are run. Table   \ref{TableNumBaseLine} summarizes the baseline parameters.

\begin{table}[h]  
	\centering  
	\caption{Baseline parameters for numerical example} 
	\label{TableNumBaseLine}
	\begin{tabular}{@{}cccccccccc@{}}
	\toprule
	$\alpha$   & $\gamma$   & \begin{tabular}[c]{@{}c@{}}failure\\ threshold\end{tabular} & $m$   & \begin{tabular}[c]{@{}c@{}}PM\\ cost\end{tabular} & \begin{tabular}[c]{@{}c@{}}CM\\ cost\end{tabular} & $c_\text{s}$  & instance & $M$   \\ \midrule
	$U(1,5)$ & $U(0.2,1)$ & 20  & 11 & $U(1,5)$  &$U(10,30) $            & 20 & 100                                                   & 100   \\ \bottomrule
\end{tabular}
\end{table}

For each $n$, we examine the average performance of 100 problem instances. Table \ref{TableNumSmall} presents the computational times of solving P2 by CPLEX and Algorithm 1 for different numbers of components. NA is reported when the computational time is either longer than 1 day or out of memory. From Table \ref{TableNumSmall}, we can see that the computational time of using CPLEX grows exponentially as the number of components increases. In contrast, Algorithm 1 finds the optimal solutions in a short amount of time.

\begin{table}[h]
\centering  
\caption{Computational time of solving P2 (in seconds)} 
\label{TableNumSmall}
\begin{tabular}{ccc|ccc}
	\hline
	$n$  & solver & Algorithm 1 & $n$  & solver  & Algorithm 1 \\ \hline
	10 & 0.422  & 0.0002      & 15 & 186.171 & 0.0004      \\
	11 & 1.029  & 0.0003      & 16 & 832.674 & 0.0004      \\
	12 & 3.064  & 0.0003      & 17 & 5093.700      &     0.0005\\
	13 & 11.943 & 0.0003      & 18 &  NA       &    0.0005         \\
	14 & 46.014 & 0.0004      & 19 &  NA       &    0.0005         \\ \hline
\end{tabular}
\end{table}

Next, we  investigate the performances of Algorithms 1 and 2 for large-scale problems. For each $n$, we similarly examine 100 problem instances. Note that CPLEX cannot solve any large-scale cases tested. Table \ref{TableNumLarge1} summarizes the performance of Algorithms 1 and 2 for large-scale problems. For each $n$ in Algorithm 1, we are interested in the average computational time of the 100 problem instances (avg. time), the maximum computational time (max time), the average $j_{\rm m}$ (avg. $j_{\rm m}$) and the maximum $j_{\rm m}$ (max $j_{\rm m}$), where $j_{\rm m}$ is the maximum set cardinality $j$ that Algorithm 1 searched. From Table \ref{TableNumLarge1}, we can see that the average time in general increases as the number of components increases. It is also noted that the maximum search time of Algorithm 1 increases substantially as the number of components increases.  This is because the solution space increases significantly as the number of components increases. As a result, Algorithm 1 may have to search more sets at higher cardinalities of $|N|$ before reaching the optimality criterion (i.e., undetermined set is empty). This is evidenced by the increase of $j_{\rm m}$. A higher cardinality generates more sets to be examined in Propositions \ref{prop2} and \ref{prop3}, and this consumes more computational time. For Algorithm 2, we examine the computational time and cost error for different stopping criteria $J$, which is the maximum set cardinality that Algorithm 1 allowed to search. We note that cost errors are all zero compared with the true objective value obtained by Algorithm 1, which shows Algorithm 2 can find high-quality solutions within a reasonable amount of time. We similarly observe that computational time increases as $J$ increases.

\begin{sidewaystable}[]
\centering
\caption{Performance of Algorithm 1$\&$2 in large-scale problems (in seconds)}
\label{TableNumLarge1}
\begin{tabular}{ccccc|cccccccccccc}
\hline
\multirow{3}{*}{$n$} & \multicolumn{4}{c|}{Algorithm 1} & \multicolumn{12}{c}{Algorithm 2} \\ \cline{2-17} 
& \multirow{2}{*}{\begin{tabular}[c]{@{}c@{}}avg.\\ time\end{tabular}} & \multirow{2}{*}{\begin{tabular}[c]{@{}c@{}}max\\ time\end{tabular}} & \multirow{2}{*}{\begin{tabular}[c]{@{}c@{}}avg.\\ $j_{\rm m}$ \end{tabular}} & \multirow{2}{*}{\begin{tabular}[c]{@{}c@{}}max\\ $j_{\rm m}$\end{tabular}} & \multicolumn{6}{c}{avg. time} & \multicolumn{6}{c}{max time} \\ \cline{6-17} 
&  &  &  &  & $J$=1 & $J$=2 & $J$=3 & $J$=4 & $J$=5 & \multicolumn{1}{c|}{$J$=6} & $J$=1 & $J$=2 & $J$=3 & $J$=4 & $J$=5 & $J$=6 \\ \hline
20 & 0.001 & 0.002 & 1.19 & 4 & 0.001 & 0.001 & 0.001 & 0.001 & 0.001 & \multicolumn{1}{c|}{0.001} & 0.003 & 0.003 & 0.004 & 0.001 & 0.001 & 0.001 \\
40 & 0.002 & 0.006 & 1.01 & 2 & 0.002 & 0.002 & 0.002 & 0.002 & 0.002 & \multicolumn{1}{c|}{0.002} & 0.008 & 0.004 & 0.004 & 0.003 & 0.006 & 0.006 \\
60 & 0.004 & 0.012 & 1.00 & 1 & 0.004 & 0.004 & 0.004 & 0.004 & 0.004 & \multicolumn{1}{c|}{0.004} & 0.012 & 0.012 & 0.010 & 0.011 & 0.012 & 0.011 \\
80 & 0.007 & 0.009 & 1.00 & 1 & 0.007 & 0.007 & 0.007 & 0.007 & 0.007 & \multicolumn{1}{c|}{0.007} & 0.012 & 0.021 & 0.022 & 0.018 & 0.021 & 0.017 \\
100 & 0.015 & 0.236 & 1.08 & 3 & 0.013 & 0.013 & 0.015 & 0.014 & 0.014 & \multicolumn{1}{c|}{0.015} & 0.036 & 0.062 & 0.251 & 0.239 & 0.239 & 0.240 \\
120 & 0.023 & 0.247 & 1.11 & 3 & 0.019 & 0.020 & 0.022 & 0.022 & 0.023 & \multicolumn{1}{c|}{0.022} & 0.073 & 0.073 & 0.278 & 0.246 & 0.253 & 0.248 \\
140 & 11.4 & 939 & 1.65 & 7 & 0.034 & 0.044 & 0.119 & 0.405 & 1.284 & \multicolumn{1}{c|}{4.307} & 0.078 & 0.146 & 0.948 & 7.750 & 45.33 & 230.5 \\
160 & 7.07 & 381 & 1.86 & 6 & 0.052 & 0.066 & 0.160 & 0.589 & 1.779 & \multicolumn{1}{c|}{7.006} & 0.123 & 0.218 & 1.307 & 10.53 & 71.90 & 380.9 \\
180 & 9.85 & 348 & 2.19 & 6 & 0.072 & 0.102 & 0.355 & 1.447 & 4.602 & \multicolumn{1}{c|}{9.940} & 0.174 & 0.396 & 3.022 & 18.51 & 137.9 & 349.1 \\
200 & 106 & 6379 & 2.65 & 8 & 0.091 & 0.134 & 0.515 & 2.559 & 9.622 & \multicolumn{1}{c|}{24.75} & 0.173 & 0.317 & 2.494 & 23.79 & 185.4 & 1205 \\ \hline
\end{tabular}
\end{sidewaystable}

\newpage

We further examine the performance the two-stage rolling horizon approach (Algorithm 3) in approximating the multi-stage model by comparing results from our approach with optimal results on small-scale problems where exact solutions can be obtained. We arbitrarily use one problem setting by taking a sample of parameters based on Table \ref{TableNumBaseLine} and replicate the problem 1000 times to obtain the average cost using Algorithm 3. Enumeration approach is used to obtain the exact solutions, since the problem size is small.  Table \ref{twoVSmul} summarizes the computational results of different instances that can be solved within one day. From Table \ref{twoVSmul}, we can see that cost percentage errors are below 20\% for all cases considered, which shows that the two-stage rolling horizon approach provides an acceptable approximation to the multi-stage problem.

\begin{table}[H]
	\centering
	\caption{Performance of two-stage rolling horizon\\
		in approximating multi-stage model }
	\label{twoVSmul}
	\begin{tabular}{@{}ccccc@{}}
		\toprule
		&            & multi-stage & \multicolumn{2}{c}{two-stage rolling horizon} \\ \midrule
		\textit{n}         & \textit{T} & cost        & avg. cost          & error \%         \\ \midrule
		\multirow{3}{*}{2} & 3          & 24.49       & 24.85              & 1.47\%           \\
		& 4          & 31.37       & 34.83              & 11.03\%          \\
		& 5          & 37.09       & 40.33              & 8.74\%          \\ \midrule
		\multirow{3}{*}{3} & 3          & 30.34       & 35.72              & 17.73\%          \\
		& 4          & 40.67       & 47.92              & 17.83\%          \\
		& 5          & 53.78       & 63.50              & 18.07\%          \\
	 \bottomrule
	\end{tabular}
\end{table}

\subsection{Case 1: degradation of wind turbine blades}
\label{sectionCase1}
Offshore wind farms are rapidly \cite{shafiee2015opportunistic} developing in recent years to provide the renewable energy for sustainable development. An offshore wind farm is usually built thousand meters away from the coastline and typically has hundreds of wind turbines. A wind turbine consists of multiple components, such as blade, main bearing, gearbox, and generator. If a maintenance team is sent to maintain a wind turbine, it is economically beneficial to jointly maintain other wind turbines \cite{tian2011condition}.

Due to the tensile mechanical loading and corrosive marine environment,  stress corrosion cracking (SCC) is one of the major contributors to blades' degradation. Shafiee et al. \cite{shafiee2015opportunistic}  model the monthly propagation of SCC as a stationary gamma process with an estimated shape parameter $\hat{\alpha}=0.542$ and rate parameter $\hat{\gamma}=1.147$. 

We consider a three-blade wind turbine system in this case study. Consider a planning horizon $T=10$ and an inspection interval of 12 months. Based on some pilot studies, we discretize the condition of a blade into 11 states, because this number of states provides us an acceptable decision accuracy while ensuring that the discretized states are robust to measurement errors. The PM cost is 200,000 Monetary Unit (MU). We consider two levels of CM costs: 600,000 MU and 1,000,000 MU.The setup cost is 130,000 MU and the failure threshold is $20$ cm \cite{shafiee2015opportunistic}.  

\begin{table}[h]  
	\centering  
	\caption{Results of multi-stage approximation (CM cost = 600,000)} 
	\label{TableCaseWind}
	\begin{tabular}{c|cc|cc|cc|c}
		\hline
		$t$\textbackslash{}$i$ & \multicolumn{2}{c|}{1} & \multicolumn{2}{c|}{2} & \multicolumn{2}{c|}{3} & \multirow{2}{*}{$\tilde{\xi}^\ast$} \\ \cline{2-7}
		& state & decision & state & decision & state & decision &  \\ \hline
		1 & 6 & no action & 5 & no action & \textbf{8} & \textbf{no action} & \multirow{5}{*}{8} \\
		2 & 10 & PM & 7 & no action & 11 & CM &  \\
		3 & 4 & no action & 10 & PM & 4 & no action &  \\
		4 & 5 & no action & 4 & no action & \textbf{8} & \textbf{no action} &  \\
		5 & 6 & no action & 9 & PM & 11 & CM &  \\ 
		6 & \textbf{8} & \textbf{no action} & 2 & no action & 2 & no action &\\
		7 & 10 & PM & 5 & no action & 5 & no action & \\
		8 & 6 & no action & \textbf{8} & \textbf{no action} & 10 & PM & \\
		9 & 8  & PM  & 10 & PM & 4 & no action & \\
		10 & 4 & no action & 4 & no action & 6 & no action & \\ \hline
		\multicolumn{8}{l}{\textit{Note: the decisions that are different from the decisions without }}\\
		\multicolumn{8}{l}{\textit{ economic dependence are shown in boldface}}\\	
	\end{tabular}
\end{table}

\begin{table}[h]  
	\centering  
	\caption{Results of multi-stage approximation (CM cost = 1,000,000)} 
	\label{TableCaseWind1}
	\begin{tabular}{c|cc|cc|cc|c}
		\hline
		$t$\textbackslash{}$i$ & \multicolumn{2}{c|}{1} & \multicolumn{2}{c|}{2} & \multicolumn{2}{c|}{3} & \multirow{2}{*}{$\tilde{\xi}^\ast$} \\ \cline{2-7}
		& state & decision & state & decision & state & decision &  \\ \hline
		1 & 6 & no action & 5 & no action & 8 & PM & \multirow{5}{*}{8} \\
		2 & 10 & PM & 7 & no action & 5 & no action &  \\
		3 & 4 & no action & 10 & PM & 8 & PM &  \\
		4 & 5 & no action & 4 & no action & 5 & no action &  \\
		5 & 6 & no action & 9 & PM & 8 & PM &  \\ 
		6 & 8 & PM & 2 & no action & 2 & no action &\\
		7 & 3 & no action & 5 & no action & 5 & no action & \\
		8 & 8 & PM & \textbf{7} & \textbf{PM} & 9 & PM & \\
		9 & 3  & no action  & 3 & no action & 4 & no action & \\
		10 & 6 & no action & 5 & no action & 6 & no action & \\ \hline
		\multicolumn{8}{l}{\textit{Note: the decisions that are different from the decisions without }}\\
		\multicolumn{8}{l}{\textit{ economic dependence are shown in boldface}}\\
	\end{tabular}
\end{table}

We use Algorithm 3 to solve this maintenance planning problem. We compare the decisions with and without considering economic dependence. Denote the PM threshold for each component in the two-stage model without considering economic dependence by $\tilde{\xi}^\ast$: If the component state is below $\tilde{\xi}^\ast$, no maintenance is performed, and if  the component is functioning and the state exceeds or equals to $\tilde{\xi}^\ast$, PM is performed.

 Tables \ref{TableCaseWind} and  \ref{TableCaseWind1} summarize the results when CM cost is 600,000 MU and 1,000,000 MU respectively. The threshold $\tilde{\xi}^\ast$ is 8 in both cases.  From Tables \ref{TableCaseWind} and  \ref{TableCaseWind1}, we can observe that maintenance decisions with and without consideration of economic dependence are different. For example, at the first decision stage ($t=1$) in Table \ref{TableCaseWind}, we can see that component 3 is not preventively maintained as it would be without considering economic dependence, so it can share the setup cost with component 1 at decision stage 2.  From Table \ref{TableCaseWind1}, we can see that component 2 is preventively maintained at decision period 8 when it is in state 7, which is below the optimal PM threshold when ignoring economic dependence.

\subsection{Case 2: degradation of crude-oil pipelines}\label{sectionCase2}

The reliability of crude-oil pipelines are critical to the safety of liquid energy supply in modern industries. Due to the corrosion, crack and mechanical damage, pipelines gradually deteriorate, which result in the decrease of pipeline wall thickness.

Based on the degradation data of six pipelines provided by a local chemical plant, we model the degradation process as a gamma process with random effects, where the shape parameter is $\alpha t$ and the rate parameter is $\gamma$. Random effects is used to capture the heterogeneities among all pipelines by assuming the rate parameter $\gamma$ follows a gamma distribution with shape parameter $\kappa$ and rate parameter $\lambda$. We regard the $\gamma$ as unknown for all pipelines, and use the expectation-maximization algorithm \cite{ye2014semiparametric} to estimate the parameters of $\alpha$, $\kappa$ and $\lambda$. Based on the data, we obtain the estimated parameters $\hat{\alpha}=1.0824$, $\hat{\kappa}=8.556$ and $\hat{\lambda}=7.654$.

\begin{table}[h]
		\centering  
	\caption{Components' states and maintenance decisions} 
	\label{TableCPipe1}
\begin{tabular}{c|cc|cc|cc|cc|cc}
	\hline
	$i$\textbackslash{}$t$ & \multicolumn{2}{c|}{1} & \multicolumn{2}{c|}{2} & \multicolumn{2}{c|}{3} & \multicolumn{2}{c|}{4} & \multicolumn{2}{c}{5} \\ \hline
	1 & 4 & no action & 6 & no action & \textbf{9} & \textbf{PM} & 5 & no action & 9 & no action \\
	2 & \textbf{8} & \textbf{PM} & 4 & no action & 5 & no action & \textbf{10} & \textbf{no action} & 11 & CM \\
	3 & 10 & PM & 4 & no action & \textbf{7} & \textbf{PM} & 5 & no action & 9 & no action \\
	4 & 5 & no action & \textbf{10} & \textbf{no action} & 11 & CM & 3 & no action & 5 & no action \\
	5 & 4 & no action & 5 & no action & \textbf{9} & \textbf{PM} & 5 & no action & 7 & no action \\
	6 & \textbf{9} & \textbf{PM} & 3 & no action & \textbf{6} & \textbf{PM} & 4 & no action & 6 & no action \\
	7 & 2 & no action & 3 & no action & 5 & no action & 7 & no action & 9 & no action \\
	8 & 10 & PM & 4 & no action & \textbf{7} & \textbf{PM} & 4 & no action & 6 & no action \\
	9 & \textbf{9} & \textbf{PM} & 3 & no action & \textbf{6} & \textbf{PM} & 3 & no action & 6 & no action \\
	10 & \textbf{9} & \textbf{PM} & 3 & no action & 4 & no action & 7 & no action & 8 & no action \\
	11 & 10 & PM & 3 & no action & 5 & no action & 6 & no action & 9 & no action \\
	12 & \textbf{7} & \textbf{PM} & 5 & no action & \textbf{8} & \textbf{PM} & 3 & no action & 4 & no action \\
	13 & \textbf{9} & \textbf{PM} & 2 & no action & 5 & no action & 7 & no action & 10 & no action \\
	14 & 1 & no action & 5 & no action & \textbf{9} & \textbf{PM} & 3 & no action & 4 & no action \\
	15 & 5 & no action & 6 & no action & \textbf{8} & \textbf{PM} & 4 & no action & 6 & no action \\
	16 & 10 & PM & 3 & no action & \textbf{8} & \textbf{PM} & 2 & no action & 5 & no action \\
	17 & 2 & no action & 3 & no action & 4 & no action & 7 & no action & 10 & no action \\ \hline
\end{tabular}
\end{table}

We consider 17 pipelines located in a small region and all pipes are shutdown when any pipe is maintained. For each pipeline, the wall thickness is $10mm$ when it is new, and the retirement thickness (failure threshold) $8mm$. The number of decision stages is 5. Suppose the costs of PM and CM are 5 and 20, and setup cost is 200. 

We solve this multi-stage pipeline maintenance problem by Algorithm 3.  We similarly compare the decisions with and without economic dependence. In this case, the optimal PM threshold without considering economic dependence is $\tilde{\xi}^\ast=10$. Table \ref{TableCPipe1} presents the state and maintenance action for each component $i$ at stage $t$. The decisions different from those without considering economic dependence are shown in boldface.  Because setup cost is much higher than the CM cost, from Table \ref{TableCPipe1}, we can see that there is a large number of different decisions, which shows the necessity of considering economic dependence when it exists.

We further investigate the impacts of parameter estimation uncertainty by considering estimated parameters as random variables. Let $\Theta=(\alpha,\kappa,\lambda)$  be the vector of parameters to be estimated. Given a parameter estimation $\hat{\Theta}$ the conditional cost is denoted by $C(\hat{\Theta})$ and the PDF value of $\hat{\Theta}$ is denoted by $f(\hat{\Theta})$. Based on the estimations, the unconditional cost is $C=\int C(\hat{\Theta})f(\hat{\Theta})d(\hat{\Theta})$. The form of PDF $f(\hat{\Theta})$ is typically complicated, and therefore it is difficult to derive the closed-form of $C$. We use the method in \cite{ye2012degradation} to approximate the unconditional cost. Specifically, we first use the bootstrap method to generate 500 samples of parameters. We then use the average conditional costs based on these samples to approximate the unconditional cost $C$. Our result shows that the mean and standard deviation of conditional costs are 770.57 and 260.5. Compared with the total maintenance cost 740 when parameter uncertainty is not considered, the impact of parameter estimation uncertainty is acceptable.

\section{Conclusion and future research}
\label{sectionConclusion}
In this paper, we study CBM optimization problem for multi-component systems over a finite planning horizon. We formulate the problem as a multi-stage stochastic integer program, providing analytical expressions for total cost and maintenance decisions. The proposed multi-stage stochastic maintenance optimization model has integer decision variables and non-linear transition probability due to the endogenous uncertainty, and is computationally intractable. We first investigate structural properties of the two-stage problem and design efficient algorithms to obtain high-quality solutions based on the structural properties. The multi-stage model is then approximated by the two-stage model using a rolling horizon approach. Computational studies show that Algorithm 1 can solve many cases to optimality quickly and Algorithm 2 can find high-quality solutions within a very short amount of time.

This work provides a new modeling approach in modeling  multi-component condition-based maintenance. Future research will consider other practical assumptions, such as the limit of maintenance budget, the requirement of system's reliability and availability, state-dependent PM cost, and state-dependent operational cost. In this paper, we mainly consider economic dependence, it is worth to further consider stochastic and structure dependences. It will also be interesting to address situations when we do not know the exact transition probabilities. A robust optimization approach may be applicable.

\newpage
\section*{Appendix}
\paragraph*{A.1. Proof of Proposition \ref{prop1}} 
\begin{proof}
	(1)	We first consider the case  where there is no failed component in the first stage.
	
	We need to compare the total costs among three cases for partition $(N_0,N_1)$: (a) $N_0 = \emptyset$, (b) $N_0 \neq \emptyset$ and $N_1 \neq \emptyset$ and (c) $N_0 = \mathcal{N}$. Denote $C_1$, $C_2$ and $C_3$ by the total costs for the three cases respectively, we show that $C_1$ is minimum.
	
	Denote the total cost for component $i \in \mathcal{N}$ \textit{without} considering economic dependence by
	\begin{displaymath}
	\left\{
	\begin{array}{lr}
	TC_{i}^{1}=c_{i,\text{pm}}+c_{\text{s}}+Q_{i}(1,m)(c_{i,\text{cm}}+c_{\text{s}}), & \tilde{x}_{i,1}=1,\\
	\specialrule{0em}{1ex}{1ex}
	TC_{i}^{0}=Q_{i}(g_{i,1},m)(c_{i,\text{cm}}+c_{\text{s}}), & \tilde{x}_{i,1}=0,
	\end{array}
	\right.
	\end{displaymath}
	 Because  $\tilde{x}_{i,1}^\ast=1$, we have $TC_i^1 < TC_i^0$, $\forall i \in \mathcal{N}$.
	
	Thus, we have
	\begin{displaymath}
	\begin{split}
	C_1 = &\sum_{i \in \mathcal{N}}TC_i^1-(n-1)c_\text{s}-c_\text{s}\sum_{i \in \mathcal{N}}Q_i(1,m)+c_\text{s}(1-\prod_{i \in \mathcal{N}}(1-Q_i(1,m)),\\
	C_2 = &\sum_{i \in N_0}TC_i^0+\sum_{i \in N_1}TC_i^1-(|N_1|-1)c_\text{s}-c_\text{s}(\sum_{i \in N_0}Q_i(g_{i,1},m)+\sum_{i \in N_1}Q_i(1,m))\\
	&+c_\text{s}(1-\prod_{i \in N_0}(1-Q_i(g_{i,1},m))\prod_{i \in N_1}(1-Q_i(1,m)))  \; \text{and}\\
	C_3 = &\sum_{i \in \mathcal{N}}TC_i^0-c_\text{s}\sum_{i \in \mathcal{N}}Q_i(g_{i,1},m)+c_\text{s}(1-\prod_{i \in \mathcal{N}}(1-Q_i(g_{i,1},m)).
	\end{split}
	\end{displaymath}
	
	(1a) Prove $C_1 < C_2$.
	
	Because
	\begin{displaymath}
	\left\{
	\begin{array}{lr}
	TC_i^0 > TC_i^1\\
	\specialrule{0em}{1ex}{1ex}
	\left(|N_1|-1\right)c_\text{s}+c_\text{s}\left(\sum_{i \in N_0}Q_i(g_{i,1},m)+\sum_{i \in N_1}Q_i(1,m)\right)<
	(n-1)c_\text{s}+c_\text{s}\sum_{i \in \mathcal{N}}Q_i(1,m)\\
	\specialrule{0em}{1ex}{1ex}
	c_\text{s}(1-\prod_{i \in N_0}(1-Q_i(g_{i,1},m))\prod_{i \in N_1}(1-Q_i(1,m)))>c_\text{s}(1-\prod_{i \in \mathcal{N}}(1-Q_i(1,m))
	\end{array}
	\right.
	\end{displaymath}
	we have  $C_1<C_2$.
	
	(1b) Prove $C_1 < C_3$
	
	It is easy to show that function $f(v_1,v_2,...,v_n)=\sum_{i \in \mathcal{N}}v_i+\prod_{i \in \mathcal{N}}(1-v_i)$ has $\frac{\partial{f}}{\partial{v_i}} \geq 0$ for all $0 \leq v_i \leq 1$, $i \in \mathcal{N}$. Therefore, we have $$\max(C_1)=C_1|_{Q_i(1,m)=0,\forall i \in \mathcal{N}}=\sum_{i \in \mathcal{N}}TC_i^1-(n-1)c_\text{s}$$ and $$\min(C_3)=C_3|_{Q_i(g_{i,1},m)=1,\forall i \in \mathcal{N}}=\sum_{i \in \mathcal{N}}TC_i^0-(n-1)c_\text{s}.$$ Because $TC_i^0 > TC_i^1$ for all $i \in \mathcal{N}$, we have $C_1 \leq \max(C_1) < \min(C_3) \leq C_3$. 
	
	Therefore, $C_1$ is minimum.

	(2) Consider the case where there exists at least one component failed at the first stage.
	
	Let set $N \subseteq \mathcal{N}$ collect all failed components and $N \neq \emptyset$. Following proof (1), we only need to compare case (a) and feasible case (b) because case (c) is not feasible. 
	
	The cost of case (a) and feasible case (b) are denoted by $C_1^\prime$ and $C_2^\prime$ respectively, where
	$$
	C_1^\prime = C_1 + \sum_{i \in N}(c_{i,\text{cm}}-c_{i,\text{pm}})
	$$
	and
	$$
	C_2^\prime = C_2 + \sum_{i \in N}(c_{i,\text{cm}}-c_{i,\text{pm}}).
	$$
	From $C_1<C_2$ in proof (1a), we have $C_1^\prime < C_2^\prime $.
	
\end{proof}

\paragraph*{A.2. Proof of Proposition \ref{prop2}}
\begin{proof}
	Denote the total cost for component $i \in \mathcal{N}$ \textit{without} considering economic dependence by
	\begin{displaymath}
	\left\{
	\begin{array}{lr}
	TC_{i}^{1}=c_{i,\text{pm}}+c_{\text{s}}+Q_{i}(1,m)(c_{i,\text{cm}}+c_{\text{s}}), & \tilde{x}_{i,1}=1,\\
	\specialrule{0em}{1ex}{1ex}
	TC_{i}^{0}=Q_{i}(g_{i,1},m)(c_{i,\text{cm}}+c_{\text{s}}). & \tilde{x}_{i,1}=0,
	\end{array}
	\right.
	\end{displaymath}
	and let $Q_i(1,m)=Q_i(1)$ and $Q_i(g_{i,1},m)=Q_i(g)$ $\forall i \in \mathcal{N}$, then we have
	\begin{displaymath}
	\begin{split}
	C=&\sum_{i \in N_0}TC_i^0 +\sum_{i \in N_1}TC_i^1-(\max(|N_1|-1,0))c_\text{s}-c_\text{s}(\sum_{i \in N_0}Q_i(g)  +\sum_{i \in N_1}Q_i(1))\\
	&+c_\text{s}(1-\prod_{i \in N_0}(1-Q_i(g))\prod_{i \in N_1}(1-Q_i(1)))
	\end{split}
	\end{displaymath}
	\begin{displaymath}
	\begin{split}
	C^\prime=&\sum_{i \in N_0^\prime}TC_i^0 +\sum_{i \in N_1^\prime}TC_i^1-(\max(|N_1^\prime|-1,0))c_\text{s}\\
	&-c_\text{s}(\sum_{i \in N_0^\prime}Q_i(g)  +\sum_{i \in N_1^\prime}Q_i(1))+c_\text{s}(1-\prod_{i \in N_0^\prime}(1-Q_i(g))\prod_{i \in N_1^\prime}(1-Q_i(1)))
	\end{split}
	\end{displaymath}
	If $N_1=\emptyset$, we have
	\begin{displaymath}
	\begin{split}
	C^\prime-C =& \sum_{k \in N}(TC_k^1-TC_k^0)+c_\text{s}\sum_{k \in N}(Q_k(g)-Q_k(1))\\
	&+c_\text{s}(\prod_{i \in N_0}(1-Q_i(g))\prod_{i \in N_1}(1-Q_i(1)) - \prod_{i \in N_0^\prime}(1-Q_i(g))\prod_{i \in N_1^\prime}(1-Q_i(1)))\\
	=&\sum_{k \in N}\underbrace{\left(c_{k,\text{pm}}-(Q_k(g)-Q_k(1)\right)c_{k,\text{cm}})}_{\rho_kc_\text{s}}+c_\text{s}\\&
	-c_\text{s}\underbrace{\prod_{i \in N_0}(1-Q_i(g))\prod_{i \in N_1}(1-Q_i(1))}_{p(N_0,N_1)}\underbrace{\left(\prod_{k \in N}\dfrac{1-Q_k(1)}{1-Q_k(g)}-1\right)}_{r_N}\\
	=&\sum_{k \in N}\rho_kc_\text{s}+c_\text{s}
	-c_\text{s}p(N_0,N_1)r_N
	\end{split}
	\end{displaymath}
	Therefore, from $C^\prime<C$, we have
	$$
	\dfrac{\sum_{k \in N}\rho_kc_\text{s}+c_\text{s}}{c_\text{s}r_Np(N_0,N_1)}=\dfrac{1+\sum_{k \in N}\rho_k}{r_Np(N_0,N_1)}=\Delta_{\text{r}}(N_0, N_1, N)<1.
	$$
	From $\Delta_{\text{r}}(N_0, N_1, N)<1$, we have $C^\prime<C$.

	Similarly, if $N_1 \neq \emptyset$,
	\begin{displaymath}
	\begin{split}
	C^\prime-C =& \sum_{k \in N}(TC_k^1-TC_k^0)-|N|c_\text{s}+c_\text{s}\sum_{k \in N}(Q_k(g)-Q_k(1))\\
	&+c_\text{s}(\prod_{i \in N_0}(1-Q_i(g))\prod_{i \in N_1}(1-Q_i(1)) - \prod_{i \in N_0^\prime}(1-Q_i(g))\prod_{i \in N_1^\prime}(1-Q_i(1)))\\
	=&\sum_{k \in N}\underbrace{\left(c_{k,\text{pm}}-(Q_k(g)-Q_k(1)\right)c_{k,\text{cm}})}_{\rho_kc_\text{s}}\\&
	-c_\text{s}\underbrace{\prod_{i \in N_0}(1-Q_i(g))\prod_{i \in N_1}(1-Q_i(1))}_{p(N_0,N_1)}\underbrace{\left(\prod_{k \in N}\dfrac{1-Q_k(1)}{1-Q_k(g)}-1\right)}_{r_N}\\
	=&\sum_{k \in N}\rho_kc_\text{s}
	-c_\text{s}p(N_0,N_1)r_N
	\end{split}
	\end{displaymath}
	Therefore, from $C^\prime<C$, we have
	$$
	\dfrac{\sum_{k \in N}\rho_kc_\text{s}}{c_\text{s}r_Np(N_0,N_1)}=\dfrac{\sum_{k \in N}\rho_k}{r_Np(N_0,N_1)}=\Delta_{\text{r}}(N_0, N_1, N)<1.
	$$		
	From $\Delta_{\text{r}}(N_0, N_1, N)<1$, we have $C^\prime<C$.
\end{proof}

\paragraph*{A.3. Proof of Proposition \ref{prop3}}
\begin{proof}
	Denote the total cost for component $i \in \mathcal{N}$ \textit{without} considering economic dependence by
	\begin{displaymath}
	\left\{
	\begin{array}{lr}
	TC_{i}^{1}=c_{i,\text{pm}}+c_{\text{s}}+Q_{i}(1,m)(c_{i,\text{cm}}+c_{\text{s}}), & \tilde{x}_{i,1}=1,\\
	\specialrule{0em}{1ex}{1ex}
	TC_{i}^{0}=Q_{i}(g_{i,1},m)(c_{i,\text{cm}}+c_{\text{s}}). & \tilde{x}_{i,1}=0,
	\end{array}
	\right.
	\end{displaymath}
	and let $Q_i(1,m)=Q_i(1)$ and $Q_i(g_{i,1},m)=Q_i(g)$ $\forall i \in \mathcal{N}$, then we have
	\begin{displaymath}
	\begin{split}
	C=&\sum_{i \in N_0}TC_i^0 +\sum_{i \in N_1}TC_i^1-(\max(|N_1|-1,0))c_\text{s}-c_\text{s}(\sum_{i \in N_0}Q_i(g)  +\sum_{i \in N_1}Q_i(1))\\
	&+c_\text{s}(1-\prod_{i \in N_0}(1-Q_i(g))\prod_{i \in N_1}(1-Q_i(1)))
	\end{split}
	\end{displaymath}
	\begin{displaymath}
	\begin{split}
	C^\prime=&\sum_{i \in N_0^\prime}TC_i^0 +\sum_{i \in N_1^\prime}TC_i^1-(\max(|N_1^\prime|-1,0))c_\text{s}-c_\text{s}(\sum_{i \in N_0^\prime}Q_i(g)  +\sum_{i \in N_1^\prime}Q_i(1))\\
	&+c_\text{s}(1-\prod_{i \in N_0^\prime}(1-Q_i(g))\prod_{i \in N_1^\prime}(1-Q_i(1)))
	\end{split}
	\end{displaymath}
	If $N_1^\prime = \emptyset$, we have
	\begin{displaymath}
	\begin{split}
	C-C^\prime =& \sum_{k \in N}(TC_k^1-TC_k^0)+c_\text{s}\sum_{k \in N}(Q_k(g)-Q_k(1))\\
	&+c_\text{s}(\prod_{i \in N_0^\prime}(1-Q_i(g))\prod_{i \in N_1^\prime}(1-Q_i(1)) - \prod_{i \in N_0}(1-Q_i(g))\prod_{i \in N_1}(1-Q_i(1)))\\
	=&\sum_{k \in N}\underbrace{\left(c_{k,\text{pm}}-(Q_k(g)-Q_k(1)\right)c_{k,\text{cm}})}_{\rho_kc_\text{s}}+c_\text{s}\\&
	-c_\text{s}\underbrace{\prod_{i \in N_0}(1-Q_i(g))\prod_{i \in N_1}(1-Q_i(1))}_{p(N_0,N_1)}\underbrace{\left(1-\prod_{k \in N}\dfrac{1-Q_k(g)}{1-Q_k(1)}\right)}_{s_N}\\
	=&\sum_{k \in N}\rho_kc_\text{s}+c_\text{s}
	-c_\text{s}p(N_0,N_1)s_N
	\end{split}
	\end{displaymath}
	From $C > C^\prime  $, we have $\sum_{k \in N}\rho_kc_\text{s}+c_\text{s}
	-c_\text{s}p(N_0,N_1)s_N > 0$. Therefore,
	$$
	\dfrac{\sum_{k \in N}\rho_kc_\text{s}+c_\text{s}}{c_\text{s}s_Np(N_0,N_1)}=\dfrac{1+\sum_{k \in N}\rho_k}{s_Np(N_0,N_1)}=\Delta_{\text{s}}(N_0,N_1,N)>1,
	$$
	From $\Delta_{\text{s}}(N_0,N_1,N)>1$, we have $C > C^\prime  $.
	
	Similarly, if $N_1^\prime \neq \emptyset$,
	\begin{displaymath}
	\begin{split}
	C-C^\prime =& \sum_{k \in N}(TC_k^1-TC_k^0)-|N|c_\text{s}+c_\text{s}\sum_{k \in N}(Q_k(g)-Q_k(1))\\
	&+c_\text{s}(\prod_{i \in N_0^\prime}(1-Q_i(g))\prod_{i \in N_1^\prime}(1-Q_i(1)) - \prod_{i \in N_0}(1-Q_i(g))\prod_{i \in N_1}(1-Q_i(1)))\\
	=&\sum_{k \in N}\underbrace{\left(c_{k,\text{pm}}-(Q_k(g)-Q_k(1)\right)c_{k,\text{cm}})}_{\rho_kc_\text{s}}\\
	&-c_\text{s}\underbrace{\prod_{i \in N_0}(1-Q_i(g))\prod_{i \in N_1}(1-Q_i(1))}_{p(N_0,N_1)}\underbrace{\left(1-\prod_{k \in N}\dfrac{1-Q_k(g)}{1-Q_k(1)}\right)}_{s_N}\\
	=&\sum_{k \in N}\rho_kc_\text{s}
	-c_\text{s}p(N_0,N_1)s_N 
	\end{split}
	\end{displaymath}
	From $C > C^\prime$, we have
	$$
	\dfrac{\sum_{k \in N}\rho_kc_\text{s}}{c_\text{s}s_Np(N_0,N_1)}=\dfrac{\sum_{k \in N}\rho_k}{s_Np(N_0,N_1)}=\Delta_{\text{s}}(N_0,N_1,N)>1,
	$$	
	From $\Delta_{\text{s}}(N_0,N_1,N)>1$, we have $C > C^\prime$
\end{proof}

\paragraph*{A.4. Proof of Proposition \ref{prop4}}
\begin{proof}

	We prove this proposition by showing that the cost of partition $(N_0^\ast,N_1^\ast)$ is no worse than that of any other feasible partitions.
	
	For any other feasible partition $(N_0^\prime,N_1^\prime)$ and the partition $(N_0^\ast,N_1^\ast)$ that is obtained by Algorithm 1, we always rewrite $(N_0^\prime,N_1^\prime)=(N_0 \cup N_b,N_1 \cup N_a)$ and  $(N_0^\ast,N_1^\ast)=(N_0 \cup N_a,N_1 \cup N_b)$ respectively, where set $N_0 = N_0^\prime \cap N_0^\ast$, $N_1 = N_1^\prime\cap N_1^\ast$, $N_b = N_0^\prime \backslash N_0=N_1^\ast \backslash N_1$ and $N_a = N_1^\prime\backslash N_1=N_0^\ast \backslash N_0$. We now show that the cost of partition $(N_0^\ast,N_1^\ast)$ is no worse than that of $(N_0^\prime,N_1^\prime)$ by the following three parts:  (1) When $N_b \neq \emptyset$, we have cost relationship  $(N_0^\ast,N_1^\ast) = (N_0 \cup N_a,N_1 \cup N_b) < (N_0 \cup N_a \cup N_b,N_1)$, (2) when $N_b \neq \emptyset$, we have cost relationship $(N_0 \cup N_a \cup N_b,N_1) < (N_0 \cup N_b,N_1 \cup N_a) =(N_0^\prime,N_1^\prime)$, and (3) we have cost $(N_0^\prime,N_1^\prime) = (N_0^\ast,N_1^\ast)$ if and only if $N_b = \emptyset$ and $\Delta_{\text{r}}(N_0 \cup N_a,N_1,N_a)=\Delta_{\text{s}}(N_0,N_1 \cup N_a,N_a)=1$.
	
	(1) When $N_b \neq \emptyset$, we have cost relationship  $(N_0^\ast,N_1^\ast) = (N_0 \cup N_a,N_1 \cup N_b) < (N_0 \cup N_a \cup N_b,N_1)$. 
	
	This is equivalent to show that given current partition $(N_0 \cup N_a \cup N_b,N_1)$, moving $N_b$ from the do-nothing set to the maintenance set can reduce cost. We next show that if we keep moving the component that arrives first in $N_b$ in Algorithm 1 to the maintenance set, the cost keeps reducing until $N_b = \emptyset$, which implies moving the whole set $N_b$ to the maintenance set reduces cost.
	
	Denote the costs of $(N_0 \cup N_a,N_1 \cup N_b)$ and $ (N_0 \cup N_a \cup N_b,N_1)$ by $C$ and $C_0$ respectively, and initialize $C^\prime = C_0$. We prove $C < C_0$  by the following steps:
	
	\textbf{Step 1}: If all components in $N_b$ are moved into $N_1^\ast$ after set $N_1$ does in Algorithm 1, then $C < C_0$ because the cost reduces if we repeat how Algorithm 1 moves $N_b$ to $N_1^\ast$.
	
	\textbf{Step 2}: In this step, there exists at least one component $i \in N_b$ that joins $N_1^\ast$ no later than some component in $N_1$. Suppose component $k \in N_b$ is the earliest one in $N_b$ that joins $N_1^\ast$ and suppose $k$ joins $N_1^\ast$ along with set $S^j$, i.e., $k \in S^j$, where $|S^j|=j$ and $S^j \subseteq N_1^\ast$. Therefore, when $S^j \subseteq N_1^\ast$ joins $N_1^\ast$, the current partition is $(N_0 \cup N_a \cup N_b \cup S, N_1 \backslash S)$, where set  $S^j\backslash \{k\} \subseteq S$, and hence from Proposition \ref{prop2}, we have 
	\begin{equation}
	\label{EquationA5_1}
	\Delta_{\text{r}}(N_0 \cup N_a \cup N_b \cup S, N_1\backslash S, S^j) < 1.
	\end{equation}
	
	\textbf{Step 3}: If $j=1$, then $S^j=\{k\}$. Denote the costs for partition $(N_0 \cup N_a \cup N_b\backslash \{k\}, N_1 \cup \{k\})$ by $C_1$. From Inequation (\ref{EquationA5_1}), we have $\Delta_{\text{r}}(N_0 \cup N_a \cup N_b, N_1, \{k\}) < \Delta_{\text{r}}(N_0 \cup N_a \cup N_b \cup S, N_1\backslash S, \{k\}) < 1$ and therefore $C_1 < C^\prime$. We then update $N_b=N_b\backslash \{k\}$ and $C^\prime = C_1$ and go to Step 1.
	
	\textbf{Step 4}: In this step, we have $j > 1$. From Algorithm 1, we know that any subset $N \subset S^j$ cannot join $N_1^\ast$ given current partition $(N_0 \cup N_a \cup N_b \cup S, N_1 \backslash S)$. From Proposition \ref{prop2}, we have 
	\begin{equation}
	\label{EquationA5_2}
	\Delta_{\text{r}}(N_0 \cup N_a \cup N_b \cup S, N_1 \backslash S, N)>1. 
	\end{equation}
	Let $N+\{k\}=S^j$. We have
	$$
	r_Np(N_0 \cup N_a \cup N_b \cup S, N_1 \backslash S)+\rho_k<\sum_{i \in N}\rho_i+\rho_k<r_{S^j}p(N_0 \cup N_a \cup N_b \cup S, N_1 \backslash S),
	$$
	where the first inequality is from Inequation (\ref{EquationA5_2}) and the second inequality is from Inequation (\ref{EquationA5_1}). Therefore, we have $\rho_k<(r_{S^j}-r_N)p(N_0 \cup N_a \cup N_b \cup S, N_1 \backslash S)$ and hence
	\begin{equation*}
	\begin{split}
	&\Delta_{\text{r}}(N_0 \cup N_a \cup N_b, N_1, \{k\})=\dfrac{\rho_k}{r_{\{k\}}p(N_0 \cup N_a \cup N_b, N_1)}\\
	&<\dfrac{(r_{S^j}-r_N)p(N_0 \cup N_a \cup N_b \cup S, N_1 \backslash S)}{r_{\{k\}}p(N_0 \cup N_a \cup N_b, N_1)}\\
	&=\dfrac{\prod_{i \in N}\frac{1-Q_i(1,m)}{1-Q_i(g_{i,1},m)}p(N_0 \cup N_a \cup N_b \cup S, N_1 \backslash S)}{p(N_0 \cup N_a \cup N_b, N_1)}\\
	&=\dfrac{p(N_0 \cup N_a \cup N_b \cup S\backslash N, (N_1\cup N) \backslash S)}{p(N_0 \cup N_a \cup N_b, N_1)}\leq 1,
	\end{split}
	\end{equation*}
	where the last inequality is from $N = S^j \backslash  \{k\} \subseteq S$. From Proposition \ref{prop2}, by denoting the cost of partition $(N_0 \cup N_a \cup N_b\backslash \{k\}, N_1 \cup \{k\})$ by $C_1$, we have $C_1<C^\prime$ since $\Delta_{\text{r}}(N_0 \cup N_a \cup N_b, N_1,\{k\})<1$. We then update $N_b=N_b\backslash \{k\}$ and $C^\prime = C_1$ and go to Step 1.
	
	Therefore,  we can always lower the cost $C^\prime$ by moving one component from $N_b$ to the maintenance set. When $N_b=\emptyset$, we have $C^\prime = C < C_0$.  
	
	(2) When $N_b \neq \emptyset$, we have cost relationship $(N_0 \cup N_a \cup N_b,N_1) < (N_0 \cup N_b,N_1 \cup N_a) =(N_0^\prime,N_1^\prime)$. 
	
	This is equivalent to show that moving set $N_a$ from the maintenance set to the do-nothing set can lower cost. From Proposition \ref{prop3}, we need to prove $\Delta_{\text{s}}(N_0 \cup N_b,N_1 \cup N_a,N_a) > 1$.
	
	By using the same method as proof (1), we can also prove the cost relationship $(N_0 \cup N_a,N_1 \cup N_b) < (N_0,N_1 \cup N_a \cup N_b)$ when $N_b \neq \emptyset$. From Proposition \ref{prop3}, we have $\Delta_{\text{s}}(N_0,N_1 \cup N_a \cup N_b,N_a)>1$. Therefore,
	\begin{equation*}
	\Delta_{\text{s}}(N_0 \cup N_b,N_1 \cup N_a,N_a)>\Delta_{\text{s}}(N_0,N_1 \cup N_a \cup N_b,N_a)>1
	\end{equation*}
	
	(3) We have cost $(N_0^\prime,N_1^\prime) = (N_0^\ast,N_1^\ast)$ if and only if $N_b = \emptyset$ and $\Delta_{\text{r}}(N_0 \cup N_a,N_1,N_a)=\Delta_{\text{s}}(N_0,N_1 \cup N_a,N_a)=1$.
	
	When $(N_0^\ast,N_1^\ast)  = (N_0^\prime,N_1^\prime) $, we have $(N_0 \cup N_a,N_1 \cup N_b) = (N_0 \cup N_a \cup N_b,N_1)=(N_0 \cup N_b,N_1 \cup N_a)$.
	
	The first equality $(N_0 \cup N_a,N_1 \cup N_b) = (N_0 \cup N_a \cup N_b,N_1)$ holds if and only if $N_b = \emptyset$. Otherwise, following the steps of proof (1), we can always have $(N_0 \cup N_a,N_1 \cup N_b) < (N_0 \cup N_a \cup N_b,N_1)$.
	
	Given $N_b = \emptyset$, the second equality is equivalent to $(N_0 \cup N_a,N_1)=(N_0,N_1 \cup N_a)$, which happens if and only if $\Delta_{\text{r}}(N_0 \cup N_a,N_1,N_a)=\Delta_{\text{s}}(N_0,N_1 \cup N_a,N_a)=1$ based on Corollary \ref{cor1}. 
	
\end{proof}

\paragraph*{B.1. Proof of Corollary 1:}
\begin{proof}
	We first show $p(N_0 \cup N_{\rm u}, N_1)r_{N} \leq p(N_0, N_1 \cup N_{\rm u})s_{N}$.
	\begin{displaymath}
	\begin{split}
	&p(N_0 \cup N_{\rm u}, N_1)r_{N} \\
	=&\prod_{i \in N_0 \cup  N_{\rm u}}(1-Q_i(g_{i,1},m))\prod_{i \in N_1}(1-Q_i(1,m))(\dfrac{\prod_{i \in N}(1-Q_i(1,m))-\prod_{i \in N}(1-Q_i(g_{i,1},m))}{\prod_{i \in N}(1-Q_i(g_{i,1},m))})\\
	=&\prod_{i \in N_0 \cup N_{\rm u}-N}(1-Q_i(g_{i,1},m))\prod_{i \in N_1 \cup  N}(1-Q_i(1,m))(\dfrac{\prod_{i \in N}(1-Q_i(1,m))-\prod_{i \in N}(1-Q_i(g_{i,1},m))}{\prod_{i \in N}(1-Q_i(1,m))})\\
	= &p(N_0 \cup N_{\rm u}\backslash N, N_1 \cup N)s_{N} \leq p(N_0, N_1 \cup N_{\rm u})s_{N},
	\end{split}
	\end{displaymath}
	where equality holds when $N=N_{\rm u}$.
	
	(1) When $N_1 \neq \emptyset$, we have 
	$$\Delta_{\text{r}}(N_0 \cup N_{\rm u}, N_1, N) =  \dfrac{\sum_{k \in N}\rho_k}{r_{N}p(N_0 \cup  N_{\rm u},N_1)} \geq \dfrac{\sum_{k \in N}\rho_k}{s_{N}p(N_0 ,N_1 \cup  N_{\rm u})} = \Delta_{\text{s}}(N_0, N_1 \cup N_{\rm u}, N),$$
	where equality holds when $N=N_{\rm u}$.
	
	(2) When $N_1 = \emptyset$ and $N \neq N_{\rm u}$, we have 
	$$\Delta_{\text{r}}(N_0 \cup N_{\rm u}, N_1, N) =  \dfrac{1+\sum_{k \in N}\rho_k}{r_{N}p(N_0 \cup  N_{\rm u},N_1)} > \dfrac{\sum_{k \in N}\rho_k}{s_{N}p(N_0 ,N_1 \cup  N_{\rm u})} = \Delta_{\text{s}}(N_0, N_1 \cup N_{\rm u}, N).$$
	
	(3)When $N_1 = \emptyset$ and $N = N_{\rm u}$, we have
	$$\Delta_{\text{r}}(N_0 \cup N_{\rm u}, N_1, N) =  \dfrac{1+\sum_{k \in N}\rho_k}{r_{N}p(N_0 \cup  N_{\rm u},N_1)} = \dfrac{1+\sum_{k \in N}\rho_k}{s_{N}p(N_0 ,N_1 \cup  N_{\rm u})} = \Delta_{\text{s}}(N_0, N_1 \cup N_{\rm u}, N).$$
	
	Therefore, $\Delta_{\text{r}}(N_0 \cup N_{\rm u}, N_1, N) > \Delta_{\text{s}}(N_0, N_1 \cup N_{\rm u}, N)$ when $N \subset N_{\rm u}$ and $\Delta_{\text{r}}(N_0 \cup N_{\rm u}, N_1, N) = \Delta_{\text{s}}(N_0, N_1 \cup N_{\rm u}, N)$ when $N=N_{\rm u}$.
\end{proof}

\clearpage
\bibliographystyle{ieeetr}
\bibliography{bibfile}
\end{document}